\DeclarePairedDelimiter\floor{\lfloor}{\rfloor}
\newtheorem{thm}{Theorem}[section]
\newtheorem{prop}[thm]{Proposition}
\newtheorem{assumption}{Assumption}
\newcommand{\din}{d_{\rm in}}
\newcommand{\dout}{d_{\rm out}}
\def\0{\mathbf{0}}
\def\1{\mathbf{1}}
\def\E{\mathbb{E}}
\def\P{\mathbb{P}}
\def\R{\mathbb{R}}
\def\cN{\mathcal{N}}
\def\cO{\mathcal{O}}
\def\leff{\lambda_{\text{eff}}}
\def\weq{ \ = \ }
\theoremstyle{definition}
\newtheorem{defn}{Definition}[section]
\newtheorem{remark}{Remark}[section]
\newtheorem{corr}{Corollary}[section]
\definecolor{brickred}{cmyk}{0,0.89,0.94,0.28}
\definecolor{goldenrod}{cmyk}{0,0.10,0.84,0}
\definecolor{purple}{cmyk}{0.45,0.86,0,0}
\definecolor{rawsienna}{cmyk}{0,0.72,1,0.45}
\definecolor{olivegreen}{cmyk}{0.64,0,0.95,0.40}
\definecolor{peach}{cmyk}{0,0.5,0.7,0}
\definecolor{darkolive}{rgb}{0.,0.4,0.}
\definecolor{vibrantgreen}{cmyk}{0.75,0,0.95,0.25}
\definecolor{darkblue}{rgb}{0.2,0.4,0.95}
\definecolor{PastelYellow}{HTML}{fdffb4}
\newcommand{\remove}[1]{}
\newcommand{\dto}{\stackrel{\rm{d}}{\rightarrow}}
\newcommand{\asto}{\stackrel{\rm{a.s.}}{\rightarrow}}
\def\0{\mathbf{0}}
\def\1{\mathbf{1}}
\def\E{\mathbb{E}}
\def\P{\mathbb{P}}
\def\R{\mathbb{R}}
\def\cN{\mathcal{N}}
\def\cO{\mathcal{O}}
\def\weq{ \ = \ }
\theoremstyle{definition}
\newcommand{\citep}[1]{\cite{#1}}
\begin{document}

\title{Spatial Queues with Nearest Neighbour Shifts \footnote{A part of this work was accepted to the conference International Teletraffic Congress (ITC 35) held between 3--5 October 2023 in Turin, Italy.}}
\author[1]{B. R. Vinay Kumar*\orcidlink{0000-0002-7329-8659}\thanks{This work was done when VK was a post-doctoral researcher at INRIA, Sophia Antipolis, France and when he was visiting Aalto University, Finland in August 2023. VK was supported by the French government through the RISE Academy of UCA$^\text{JEDI}$ Investments in Future project managed by the National Research Agency (ANR) with reference number ANR-15-IDEX-0001, and the Aalto Science Institute (AScI) visiting researcher fellowship.} }
\affil[1]{Eindhoven University of Technology, P.O. Box 513, 5600 MB Eindhoven, The Netherlands. \authorcr E-mail: v.k.bindiganavile.ramadas@tue.nl}
\author[2]{Lasse Leskel\"{a}\orcidlink{0000-0001-8411-8329}} 
\affil[2]{Department of Mathematics and Systems Analysis, Aalto University, Otakaari 1, 02150 Espoo, Finland. E-mail: lasse.leskela@aalto.fi}

\bibliographystyle{abbrv}

\maketitle

\section{Introduction}
\label{intro}

Traditional queueing systems involve multiple queues interacting with each other. The analysis of the queueing mechanism and related load balancing questions are tractable owing to the product-form stationary distribution. Most earlier works on queues distributed spatially involve customers arriving at random locations in a Euclidean space and a server travelling to serve them. However, most queueing networks in the present day involve servers that are distributed in space and arrivals deciding among multiple servers. This work addresses the problem of load distribution in such networks where the servers are spatially distributed and the arrivals decide to join a server based on proximity.

As a motivating example, consider the rapidly growing electric vehicles (EV) industry. With increased adaptation of EVs,  the charging infrastructure is also being scaled up.  However, physical and financial constraints put a cap on the number of charging stations that can be deployed. In such a scenario, it is natural to expect EV users to adopt strategies in order to minimize their waiting times at charging stations. For example, a user on arriving at a charging station and finding it to be occupied, might decide to travel to a farther station in the hope that it would be empty. The user strategy affects the load that is perceived at the servers in the long run. Some of the servers get overloaded which might degrade the performance of the system on the whole. An understanding of the fraction of overloaded servers helps in optimal resource allocation preventing such degradation. Additionally, monitoring the fraction of overloaded servers can be used to incentivize customers to change their strategies thus increasing the durability of the entire system. Similar considerations arise in other practical networks involving queues, like supermarkets, airports etc.

Motivated by such applications, in this work we consider a set of $N$ servers that are deployed in a Euclidean space with queues at each of them modelled as stationary arrival processes. In the context of EVs, the servers are the charging stations, and a stationary arrival process models the EV users arriving at a charging station. EV charging stations have been modelled as a Poisson point process in two dimensional space in several previous works (see, for e.g., \citep{dong_electric_2019,ren_novel_2020,atat_stochastic_2020}). However, the problem that authors address in these works pertain to optimal placement of charging stations in the underlying space. 

Another line of work that considers queues on spaces are polling systems. One of the earlier works in this domain \citep{altman_queueing_1994} considers multiple queues in a convex space with a single server moving across to serve them. Several later works, such as \citep{kavitha_queuing_2009,celik_dynamic_2010}, study vehicle routes and delay in such systems. In all of these (and some related) works there is a single server and there is no interaction between different queues.

In contrast, in the present work we consider queues where customers from one queue probabilistically move to a queue which is located close to it in the underlying space. Customers changing queues has been referred to as \emph{jockeying} in the queueing theory terminology.
There has been considerable work on jockeying in queues in the past \citep{koenigsberg_jockeying_1966,dudin_analysis_2023,lin_optimal_2022}. The focus in these works is predominantly to analyse the steady-state distribution or find expected line-lengths or delays. However, they primarily consider two server systems with no spatial component in the problem.

In \citep{stacey_greedy_2011}, the authors consider arrivals occurring on the two-dimensional torus, with multiple servers following a greedy strategy to serve the customers by travelling minimally. They find that such a strategy results in servers coalescing making the system inefficient. In contrast, in our work the servers are fixed whereas the arrivals follow nearest neighbour strategies and hence the two works are not directly comparable.

The closest work to ours is \citep{panigrahy2022analysis} where the authors consider load-balancing policies for servers distributed in space. Specifically, $m$ servers are distributed in a two-dimensional Euclidean space and $n$ customers arrive distributed uniformly in space. The customers choose the shorter queue among the server they arrived at and a second server sampled uniformly at random from $k$ geographically nearest servers  of the server where the customer arrived. Specifically, the authors observe  that such a policy shows a better performance with respect to the maximum load. While these observations are interesting in their own right, the performance metrics of interest and the model setting are considerably different from the current work. Instead of a setting where there are fixed number of arrivals, we consider stationary arrival processes into each of the servers with customers following nearest neighbour mobility strategies. Additionally, instead of the maximum load and the allocation distance, our interest is in the distribution of the number of overloaded servers in the system. These differences render the two works incomparable. However, it should be remarked that there is considerable scope for combining ideas from both the works.

More precisely, in the current work, $N$ servers are deployed in a Euclidean space and customers arriving into a queue decide whether to stay or to move to a queue nearest to them with a prescribed probability. We call such strategies as nearest neighbour shift (NNS) strategies. The metric of interest is the expected fraction of overloaded servers in the system which we characterize for users following an NNS strategy. 

We begin by describing our system model and state the main results in Section \ref{sec:model}. Section \ref{sec:prelims} provides the required background on nearest neighbour graphs which will be used to characterize the overloaded servers for the NNS strategies. This is followed by the analysis of NNS strategies for general dimension in Section \ref{sec:2d} and additional results for one dimension in Section \ref{sec:1d}. Section \ref{sec:simu} provides numerical simulations justifying our results and Section \ref{sec:conc_fw} highlights some future directions.

\section{System model and main results}
\label{sec:model}

\subsection{Model description}

Consider $N$ service stations with locations $\{X_i\}_{i=1}^N$ uniformly and independently distributed within $[0,1]^d$, $d \ge 1$, equipped with the standard Euclidean metric. Each service station is associated with a stationary arrival process with an \emph{exogenous arrival rate} of $\lambda$, a server operating at a service rate $\mu$, and a queue of infinite capacity.  We use the terms queue~$i$ and server~$i$ to refer to the service station located at $X_i$.
Customers arriving at a queue can adopt different strategies. In this work, we consider all customers to be identical who make independent decisions.  Our interest is in \emph{nearest neighbour shift} (NNS) strategies which we describe next. 

\begin{defn}
Under the \emph{$(k,p)$-NNS strategy} with activity range $k \ge 1$ and a shift rate $p \in [0,1]$, customers arriving at queue~$i$ join queue~$i$ with probability $1-p$, and join queue $j \in \cN_k(i)$ with probability $\frac{p}{k}$, where $\cN_k(i)$ denotes the set of $k$ nearest neighbours of $i$.

\label{defn:kpnns}
\end{defn}

We emphasize the difference between a customer \emph{arriving} at a queue and \emph{joining} a queue.  Whereas each queue~$i$ has exogenous arrival rate $\lambda$, the rate at which customers join queue~$i$ may be smaller or larger than $\lambda$.

\begin{defn}
The \emph{effective arrival rate} $\leff(i)$ at a server $i$ is the rate at which customers join queue~$i$. 
\label{defn:leff}
\end{defn}

Our goal is to characterize the expected fraction of overloaded servers in the system. Because each queue is processed at service rate $\mu$, 
the load at queue~$i$ is equal to $\rho_i = \frac{\leff(i)}{\mu}$, and 
the fraction of overloaded servers (with $\rho_i > 1$) can be written as
\begin{equation}
 \cO_N
 = \frac{1}{N} \left|\left\{i: \leff(i) > \mu \right\}\right|,
 \label{eq:overload}
\end{equation}
where $|\cdot|$ denotes the cardinality. Note that $\cO_N$ is a random variable where the randomness is due to the spatial distribution of the servers.

\subsection{Main results for general dimension}
\label{sec:MainResultsForGeneralDimension}

Our main results characterize the fraction of overloaded servers for the $(k,p)$-NNS strategy for $N$ servers randomly distributed within $[0,1]^d$. 
The results are described in terms of a geometric constant
$\alpha_d$, defined as the maximum number of points that can be placed on
the unit sphere of $\R^d$ so that the distance between all distinct points is
strictly larger than one.  It is known \citep{bahadirNumberWeaklyConnected2021} that $\alpha_2 = 5$ and $\alpha_3 = 12$,
and that $\alpha_d$ is bounded from above by the kissing number in $\R^d$.

\begin{thm}
Consider $N$ servers with service rate $\mu$ that are uniformly distributed in $[0,1]^d$ and receive exogenous arrivals at rate $\lambda$ following the $(k,p)$-NNS strategy.
\begin{enumerate}[label=(\roman*)]
\item 
If $\frac{1}{1-p + \alpha_d p} < \frac{\lambda}{\mu} \le 1$, then there exist constants
$q_{d,k,n}$ and $\sigma_{d,k}$
such that for $N \to \infty$, the fraction of overloaded servers satisfies 
\[
 \cO_N \asto
 \sum_{n=\floor{\theta}+1}^{\alpha_d k} q_{d,k,n}
 \quad\text{and}\quad
 \sqrt{N} \big(\cO_N - \E \cO_N \big) \dto \cN(0,\sigma_{d,k}^2)
\]
where $\theta = k + \frac{k}{p} \left( \frac{\mu}{\lambda} -1 \right)$.

\item If $\frac{\lambda}{\mu} \le \frac{1}{1-p + \alpha_d p}$,
then $\cO_N = 0$ a.s.\ for all $N$.
\end{enumerate}

\label{thm:main2d}
\end{thm}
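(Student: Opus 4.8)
The plan is to reduce the whole statement to the in-degree distribution of the $k$-nearest-neighbour graph (k-NNG) on the server locations, and then apply a law of large numbers and a central limit theorem for stabilizing geometric functionals. Write $\din(i) = |\{j : i \in \cN_k(j)\}|$ for the number of servers that list $i$ among their $k$ nearest neighbours, i.e.\ the in-degree of $i$ in the k-NNG. Counting the customers who join queue~$i$ — the fraction $1-p$ of its own arrivals, plus a fraction $p/k$ of the arrivals of each server pointing to it — gives the effective arrival rate
\[
 \leff(i) \weq \lambda\Big( (1-p) + \tfrac{p}{k}\,\din(i) \Big).
\]
Hence server $i$ is overloaded, $\leff(i) > \mu$, exactly when $\din(i) > \theta$ with $\theta = k + \frac{k}{p}(\frac{\mu}{\lambda}-1)$, and since $\din(i)$ is integer-valued this is equivalent to $\din(i) \ge \floor{\theta}+1$. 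Therefore
\[
 \cO_N \weq \frac{1}{N}\sum_{i=1}^N \mathds{1}\!\left[\din(i) \ge \floor{\theta}+1\right],
\]
so everything is controlled by the empirical in-degree distribution of the k-NNG.

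For part (ii) I would invoke the structural bound from Section~\ref{sec:prelims} that the in-degree in the k-NNG never exceeds $\alpha_d k$, i.e.\ $\din(i) \le \alpha_d k$ almost surely. Under the assumption $\frac{\lambda}{\mu} \le \frac{1}{1-p+\alpha_d p}$ one has $\frac{\mu}{\lambda} \ge 1-p+\alpha_d p$, and substituting into the definition of $\theta$ gives $\theta \ge \alpha_d k$. Thus the overload condition $\din(i) > \theta \ge \alpha_d k$ can never be met, the indicator sum is empty, and $\cO_N = 0$ for every $N$.

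For the almost sure convergence in part (i), I would use the local weak convergence (objective method) of the k-NNG on $N$ uniform points in $[0,1]^d$ towards the k-NNG built on a homogeneous Poisson process on $\R^d$. The in-degree is a bounded, translation-invariant local functional, so the empirical in-degree distribution converges almost surely to the Palm law of the typical point; setting $q_{d,k,n} = \Palm(\din(\0) = n)$ identifies the limit and yields $\cO_N \asto \sum_{n=\floor{\theta}+1}^{\alpha_d k} q_{d,k,n}$, the upper summation limit being $\alpha_d k$ because $\din(\0) \le \alpha_d k$ under $\Palm$. Boundary points of the cube, of which there are $O(N^{1-1/d}) = o(N)$, do not affect this limit.

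The Gaussian limit is the main obstacle, and I would obtain it from the central limit theorem for stabilizing functionals of point processes (Penrose–Yukich). The key technical point is that the indicator $\mathds{1}[\din(i) > \theta]$ is strongly stabilizing: deciding whether $i \in \cN_k(j)$ for the finitely many candidate servers $j$ near $i$ requires knowing each such $j$'s $k$ nearest neighbours, so the value depends only on servers within a stabilization radius $R(i)$. One must verify that $R(i)$ has exponentially decaying tails under the Poisson input, which follows from standard packing estimates (only $O(\alpha_d k)$ servers can point to $i$, and a ball of fixed radius contains $\Omega(1)$ Poisson points with overwhelming probability). Granting strong stabilization and the uniform boundedness of the summand, the machinery yields $N\,\Var(\cO_N) \to \sigma_{d,k}^2$ together with asymptotic normality, and a de-Poissonization step transfers the statement from a $\mathrm{Poisson}(N)$ number of points to exactly $N$ points. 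The remaining subtlety is to argue nondegeneracy, $\sigma_{d,k}^2 > 0$, which can be checked by exhibiting a local configuration perturbation that changes the overloaded count with positive probability.
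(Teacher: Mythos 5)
Your reduction of the overload event to the in-degree condition $\din(i) > \theta$ in the $k$-NN graph, and your treatment of part (ii) via the deterministic bound $\din(i) \le \alpha_d k$, coincide exactly with the paper's argument. Where you genuinely diverge is in how the limit theorems in part (i) are obtained. The paper does not invoke the stabilization machinery directly: it first converts the count of high-in-degree vertices into a linear combination of counts $I^N_{K_m}$ of directed star subgraphs via an inclusion--exclusion identity (Proposition~\ref{prop:indeg_iso}), and then applies a ready-made strong law and CLT for linear combinations of weakly connected subgraph counts in $k$-NN graphs (Theorem~\ref{thm:iso_conv}, reproduced from \citep{bahadirNumberWeaklyConnected2021}). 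Your route applies the general Penrose--Yukich theory of stabilizing functionals to the bounded local functional $\1\{\din(i)>\theta\}$ itself, which avoids the combinatorial inversion but obliges you to verify strong stabilization, exponential tails of the stabilization radius, and a de-Poissonization step --- precisely the ingredients that the cited theorem packages away. Both approaches are sound; yours is more self-contained and identifies the limit constants intrinsically as Palm probabilities $q_{d,k,n} = \Palm\big(\din(\0)=n\big)$ of the Poisson $k$-NN graph (consistent with the formula \eqref{eq:qjdk} the paper quotes for $k=1$), while the paper's is shorter and delegates the probabilistic work. One small remark: the nondegeneracy $\sigma_{d,k}^2>0$ that you flag as a remaining subtlety is not actually required, since the statement only asserts the existence of the constant, and the cited theorem likewise allows a degenerate limit.
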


Theorem~\ref{thm:main2d} confirms that the number of overloaded servers has a normal distribution in the regime when $N \to \infty$. This is illustrated in our numerical results in Section~\ref{sec:simu}.
The constants $q_{d,k,n}, \sigma_{d,k}$ are discussed in Section~\ref{sec:knn_graph}.
Note that the almost sure ($\asto$) and distributional ($\dto$) limits above refer to the randomness induced by the spatial distribution of the servers.

\subsection{Main results for one dimension}

As a corollary of Theorem~\ref{thm:main2d}, we obtain the following explicit result 
for dimension $d=1$ in which the arrivals follow the $(1,p)$-NNS strategy.

\begin{corr}
Consider $N$ servers with service rate $\mu$ that are uniformly distributed in $[0,1]$ and receive exogenous arrivals at rate $\lambda$ following the $(1,p)$-NNS strategy.
\begin{enumerate}[label=(\roman*)]
\item 
If $\frac{1}{1+p} < \frac{\lambda}{\mu} \le 1$, then the fraction of overloaded servers satisfies
\[
 \cO_N \asto \frac14
 \quad \text{and} \quad
 \sqrt{N}\big(\cO_N-\E\cO_N\big)
 \dto \cN\Big(0,\frac{19}{240}\Big)
 \quad\text{as $N \to \infty$}.
\]

\item If $\frac{\lambda}{\mu} \le \frac{1}{1+p}$,
then $\cO_N = 0$ a.s.\ for all $N$.
\end{enumerate}
\label{thm:convergence_1d}
\end{corr}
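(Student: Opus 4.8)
Since the corollary is exactly the specialization $d=1$, $k=1$ of Theorem~\ref{thm:main2d}, the plan is to evaluate the abstract constants $\alpha_1$, $q_{1,1,n}$ and $\sigma_{1,1}$ appearing in that statement. The geometric input is immediate: the unit sphere of $\R$ is $\{-1,+1\}$, a pair of points at distance $2>1$, so $\alpha_1=2$. Substituting $\alpha_1=2$ into the thresholds $\frac{1}{1-p+\alpha_d p}$ of Theorem~\ref{thm:main2d} gives $\frac{1}{1+p}$, so the two regimes in the corollary coincide with (i) and (ii) there, and regime~(ii) already yields $\cO_N=0$. It remains to identify the limit in regime~(i) and the variance.

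For the almost-sure limit I would use that, with $k=1$, the effective arrival rate is $\leff(i)=(1-p)\lambda+I_i\,p\lambda$, where $I_i$ is the in-degree of server~$i$ in the nearest-neighbour graph; hence server~$i$ is overloaded exactly when $I_i>\theta=1+\tfrac1p(\tfrac{\mu}{\lambda}-1)$. In regime~(i) one has $1\le\theta<2$, so $\floor{\theta}=1$, and as the maximal in-degree on the line is $\alpha_1 k=2$ the sum in Theorem~\ref{thm:main2d} reduces to the single term $q_{1,1,2}$, the probability that a typical server is the nearest neighbour of exactly two others. Passing to the Poisson limit, in which consecutive spacings of the uniform points become i.i.d.\ exponential, the in-degree-two event splits as ``the left neighbour points in'' $\cap$ ``the right neighbour points in''; these depend on disjoint pairs of spacings, are therefore independent, and each has probability $\tfrac12$ by exchangeability, so $q_{1,1,2}=\tfrac14$ and $\cO_N\asto\tfrac14$.

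For the central limit theorem I would write $\cO_N=\tfrac1N\sum_i Y_i$ with $Y_i=\1[I_i=2]=\1[D_{i-1}<D_{i-2}]\,\1[D_i<D_{i+1}]$, a functional of the four spacings $D_{i-2},\dots,D_{i+1}$. In the i.i.d.-spacing limit $(Y_i)$ is stationary and $3$-dependent (the covariance vanishes once the index windows are disjoint, i.e.\ at lag $\ge 4$), so $\sigma_{1,1}^2=\Var(Y_0)+2\sum_{m=1}^{3}\mathrm{Cov}(Y_0,Y_m)$. Each term is a combinatorial ordering probability of i.i.d.\ continuous variables: from $\E Y_0=\tfrac14$ one gets $\Var(Y_0)=\tfrac{3}{16}$, and I would compute $\E[Y_0Y_1]=\tfrac1{20}$, $\E[Y_0Y_2]=0$ (the shared comparison $D_0<D_1$ and its reverse $D_1<D_0$ are mutually exclusive), and $\E[Y_0Y_3]=\tfrac1{12}$, giving covariances $-\tfrac1{80}$, $-\tfrac1{16}$, $\tfrac1{48}$ and hence \[ \sigma_{1,1}^2=\tfrac{3}{16}+2\Big(-\tfrac1{80}-\tfrac1{16}+\tfrac1{48}\Big)=\tfrac{19}{240}. \] The main obstacle is this variance: one must justify the Poisson/i.i.d.-spacing limit for the covariance sum, pin down the exact (finite) dependency range, and evaluate every overlapping-spacing ordering correctly --- the lag-$2$ cancellation in particular --- since a slip in any single joint ordering probability would spoil the final constant $\tfrac{19}{240}$.
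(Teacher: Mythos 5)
Your reduction to Theorem~\ref{thm:main2d} is exactly what the paper does: $\alpha_1=2$, the threshold $\frac{1}{1-p+\alpha_1 p}=\frac{1}{1+p}$, and in regime (i) the bound $1\le\theta<2$ forces $\floor{\theta}=1$, so the limit is the single constant $q_{1,1,2}$ and the CLT variance is $\sigma_{1,1}^2$. Where you diverge is in how these constants are obtained: the paper simply quotes $q_{1,1,2}=\frac14$ and $\sigma_{1,1}^2=\frac{19}{240}$ from Bahadır--Ceyhan, whereas you recompute them from the spacing representation $Y_i=\1[D_{i-1}<D_{i-2}]\,\1[D_i<D_{i+1}]$. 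Your arithmetic checks out: $\E Y_0Y_1=\frac{1}{20}$ (the middle spacing is the minimum of five with probability $\frac15$, times $\frac12\cdot\frac12$ for the two independent side comparisons), $\E Y_0Y_2=0$ by the incompatible comparisons $D_0<D_1$ and $D_1<D_0$, $\E Y_0Y_3=\frac12\cdot\frac13\cdot\frac12=\frac{1}{12}$ via $D_1>\max(D_0,D_2)$, and $\frac{3}{16}+2\bigl(-\frac{1}{80}-\frac{1}{16}+\frac{1}{48}\bigr)=\frac{19}{240}$. This is a genuinely more self-contained and informative derivation --- it also recovers $q_{1,1,2}=\frac14$ in the spirit of the paper's Proposition~\ref{prop:frac_indeg}, and for the mean no limit is even needed, since the finitely many uniform spacings are exchangeable and the events depend only on their relative order. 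The one real gap is the one you flag yourself: the $3$-dependent CLT is stated for the idealized i.i.d.-exponential-spacing model, but the actual spacings of $N$ uniform points are exchangeable Dirichlet variables with a global constraint (they sum to one), so transferring the distributional limit and the variance formula back to the finite model requires a de-Poissonization or stabilization argument. That transfer is precisely what the subgraph-count machinery behind Theorem~\ref{thm:iso_conv} supplies, so to make your route fully rigorous you would either invoke that result anyway or supply the Poissonization step explicitly; as written, the covariance computation identifies the constant but does not by itself prove the CLT.
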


In one dimension, additional explicit results can be obtained.
In fact, for $d=1$ and $k=1$, we obtain the exact expression for $\E\cO_N$ for all $N\ge 1$. Specifically, for the $(1,p)$-NNS strategy, we find that a quarter of the servers get overloaded  which is stated in the following theorem. 

\begin{thm}
Consider $N \ge 4$ servers distributed uniformly in $[0,1]$ with exogenous arrivals of rate $\lambda$ following the $(1,p)$-NNS strategy and having a service rate $\mu$. If $\frac{1}{1+p} < \frac{\lambda}{\mu} \le 1$, then the expected fraction of overloaded servers is $\E\cO_N = \frac{1}{4}.$ 
\label{thm:main1d}
\end{thm}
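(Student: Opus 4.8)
The plan is to reduce the overload condition to a statement about the in-degree of servers in the $1$-nearest-neighbour graph on $\{X_i\}$, and then to exploit exchangeability of the spacings of uniform order statistics to compute the required probabilities exactly.

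\textbf{Step 1 (overload $\iff$ in-degree $2$).} Under the $(1,p)$-NNS strategy server $i$ collects a fraction $1-p$ of its own arrivals plus a fraction $p$ of the arrivals of every server whose unique nearest neighbour is $i$, so that
\[
\leff(i) \weq \lambda(1-p) + \lambda p\,\din(i),
\qquad \din(i) = \big|\{\,j : i\in\cN_1(j)\,\}\big|.
\]
Hence $i$ is overloaded iff $\din(i) > \theta$ with $\theta = 1 + \tfrac1p\big(\tfrac{\mu}{\lambda}-1\big)$. A direct computation shows that the hypothesis $\tfrac1{1+p} < \tfrac{\lambda}{\mu} \le 1$ is exactly equivalent to $1 \le \theta < 2$, so $\floor{\theta}=1$ and overload is equivalent to $\din(i)\ge 2$. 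In dimension one a point can be the nearest neighbour of at most its two sorted-order neighbours (equivalently $\alpha_1 = 2$), so $\din(i)\le 2$ and overload is equivalent to $\din(i)=2$.

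\textbf{Step 2 (in-degree in terms of gaps).} Order the locations as $x_{(1)} < \cdots < x_{(N)}$ and write $g_j = x_{(j+1)}-x_{(j)}$ for $1\le j\le N-1$. Each interior point $x_{(j)}$ points to its left neighbour iff $g_{j-1} < g_j$ and to its right neighbour otherwise, while the two extreme points always point inward. Therefore $\din(x_{(j)})=2$ iff the left neighbour points right \emph{and} the right neighbour points left. Translating into gap inequalities, this is the event $\{g_{j-2} > g_{j-1}\}\cap\{g_j < g_{j+1}\}$ for interior indices $3\le j\le N-2$; for the two near-boundary indices $j=2$ and $j=N-1$ one inward arrow is forced and only a single comparison remains (for $j=2$ the event $\{g_2 < g_3\}$, for $j=N-1$ the event $\{g_{N-2} < g_{N-3}\}$); and for $j\in\{1,N\}$ the in-degree cannot reach $2$.

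\textbf{Step 3 (probabilities via exchangeability).} The $N+1$ spacings of $N$ i.i.d.\ uniform points in $[0,1]$ are exchangeable and a.s.\ distinct, so any strict ranking among a fixed subset of them is uniform. For $3\le j\le N-2$ the event of Step 2 compares two \emph{disjoint} pairs of distinct spacings; conditioning on which values fall into each pair makes the two comparisons conditionally independent and each fair, giving $\P(\din(x_{(j)})=2)=\tfrac14$. For $j\in\{2,N-1\}$ a single fair comparison remains, giving probability $\tfrac12$, and for $j\in\{1,N\}$ the probability is $0$.

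\textbf{Step 4 (summation).} By linearity of expectation,
\[
\E\cO_N \weq \frac1N \sum_{j=1}^N \P\big(\din(x_{(j)})=2\big)
\weq \frac1N\Big( 0 + \tfrac12 + (N-4)\tfrac14 + \tfrac12 + 0 \Big)
\weq \frac1N \cdot \frac N4 \weq \frac14,
\]
valid for every $N\ge 4$. The step I expect to be the main obstacle is the boundary bookkeeping in Steps 2--3: the two endpoints contribute $0$ instead of the generic value $\tfrac14$ (a total deficit of $\tfrac12$), while the two near-boundary servers contribute $\tfrac12$ instead of $\tfrac14$ (a total excess of $\tfrac12$), and it is precisely because these cancel that the answer is exactly $\tfrac14$ for every finite $N$ and not merely in the limit. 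Care is also needed for small $N$ (the case $N=4$, where the interior sum is empty), which the formula above handles correctly.
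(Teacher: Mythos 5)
Your proof is correct, and it reaches the same structural reduction as the paper — overload is equivalent to in-degree $2$ in the $1$-NN graph, so $\E\cO_N = \frac1N\E[Q^N_{1,1,2}]$, computed by linearity with separate bookkeeping for the four boundary-adjacent indices — but the key probability computation is done by a genuinely different and more elementary method. The paper (Proposition~\ref{prop:frac_indeg}) conditions on $X_{(i)}=x$, invokes the conditional independence of the left and right order statistics, and evaluates the relevant probabilities as explicit double integrals against the joint density $f_{j,j+1}$ of consecutive order statistics; moreover it carries out this integration only for the in-degree-$0$ event and asserts the in-degree-$2$ case by ``similar computations'' (it also has a separate combinatorial identity $Q^N_{1,1,0}=Q^N_{1,1,2}$ a.s., Proposition~\ref{prop:r0r2}, which could close that gap). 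You instead rewrite the in-degree-$2$ event directly in terms of the inter-point gaps $g_{j-2}>g_{j-1}$ and $g_j<g_{j+1}$ and use exchangeability of the uniform spacings: the two comparisons involve disjoint pairs of a.s.\ distinct exchangeable variables, so each is a fair coin and they are (conditionally) independent, giving $\tfrac14$ with no integration at all. Your route buys a shorter, computation-free argument that treats the in-degree-$2$ case head-on and makes the exact boundary cancellation ($2\cdot\tfrac12$ versus $2\cdot\tfrac14$ plus two zeros) transparent for every finite $N\ge 4$; the paper's route is heavier but produces, as a by-product, the full in-degree distribution $(\tfrac14,\tfrac12,\tfrac14)$ and the machinery reused elsewhere. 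The only places requiring care in your write-up — the identification of the near-boundary events for $j=2$ and $j=N-1$, the forced inward edges from nodes $1$ and $N$, and the empty interior sum at $N=4$ — are all handled correctly.
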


In the process of proving this theorem, we also obtain the expected fraction of servers with all possible loads. This is detailed in Section \ref{sec:1d}.

Often, EV users tend to have a pre-determined destination while joining a highway and choose either to go left (with probability $\ell$) or right (with probability $r$) and proceed to the nearest server in that direction (or stay in the same queue with probability $1-r-\ell$). Customers at the left (right) boundary, choose the closest neighbour on the right (resp., left) with probability $r$ (resp., $\ell$) or stay in the same queue with the remaining probability. For such a \emph{left-right nearest neighbour shift ($\ell r$-NNS)} strategy, the following theorem asserts that there are no overloaded servers.

\begin{thm}
For $N$ servers distributed uniformly in $[0,1]$ with stationary exogenous arrivals of intensity $\lambda \le \mu$ following the $\ell r$-NNS strategy with parameters $r$ and $\ell$, the fraction of overloaded servers equals $\cO_N = 0$ a.s.
\label{thm:lrnns}
\end{thm}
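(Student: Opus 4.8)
The plan is to reduce the claim to a deterministic statement about effective arrival rates and then rule out $\leff(i) > \mu$ configuration by configuration. First I would relabel the servers so that $X_{(1)} < X_{(2)} < \cdots < X_{(N)}$; since the positions are i.i.d.\ uniform, they are almost surely distinct, so this ordering and every nearest-neighbour relation are almost surely well defined, and it is exactly on this almost-sure event that I will argue. In one dimension the nearest neighbour of server $i$ to the left (resp.\ right) is simply the adjacent server $i-1$ (resp.\ $i+1$), so the $\ell r$-NNS strategy induces a tridiagonal \emph{routing matrix} $P$, where $P_{ij}$ is the probability that a customer arriving at $i$ joins $j$: for an interior index one has $P_{i,i-1}=\ell$, $P_{i,i}=1-\ell-r$, and $P_{i,i+1}=r$, while the first and last rows encode the boundary rule.

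Next I would write the effective arrival rate as $\leff(i) = \lambda \sum_{j} P_{ji}$, that is, $\lambda$ times the $i$-th column sum of $P$, since a customer contributes to queue~$i$ precisely when it is routed into $i$. The whole theorem then reduces to the deterministic inequality $\sum_j P_{ji} \le 1$ for every $i$: once this holds, $\leff(i) \le \lambda \le \mu$ for all $i$, the set $\{i : \leff(i) > \mu\}$ is empty, and hence $\cO_N = 0$ on the almost-sure event above. As a sanity check, no customer is ever created, so $\sum_i \leff(i) \le N\lambda$ and the column sums average to at most one; the content of the theorem is the sharper uniform bound that \emph{none} of them individually exceeds one.

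For an interior column the inflow bookkeeping is immediate: server $i$ receives $\lambda r$ from its left neighbour $i-1$ (its right-movers) and $\lambda \ell$ from its right neighbour $i+1$ (its left-movers), plus the retained mass $\lambda(1-\ell-r)$, so the column sum is exactly $1$ and $\leff(i)=\lambda$. The real work — and the step I expect to be the main obstacle — is the two boundary columns $i=1$ and $i=N$, where one neighbour is missing and the blocked-direction arrivals are reallocated by the boundary rule. Here one must track carefully what happens to the arrivals that would have crossed the absent edge and verify that the resulting net inflow at an endpoint, namely its retained mass together with the single inflow it still receives from its lone neighbour, keeps the boundary column sum at most one. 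This is the only place where the asymmetry between $\ell$ and $r$ enters, so it is precisely where the boundary rule must be shown to cooperate; by contrast the reduction to column sums and the interior computation are routine.
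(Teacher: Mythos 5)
Your reduction to column sums of the routing matrix and your interior computation coincide with the paper's argument, which performs the same inflow bookkeeping: an interior server retains mass $1-\ell-r$ of its own arrivals and receives $r$ from its left neighbour and $\ell$ from its right neighbour, so $\leff(i)=\lambda(1-\ell-r)+\lambda r+\lambda\ell=\lambda$. The gap is that you stop exactly at the step you yourself identify as the main obstacle: you never carry out the boundary computation, only announce that ``one must verify'' that the two boundary column sums stay at most one. Since the interior case is immediate, that verification is essentially the entire content of the theorem, and a proposal that defers it has not proved the statement.

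Moreover, your suspicion about the boundary is well founded. Under the boundary rule as stated (the leftmost server's customers go right with probability $r$ and stay with probability $1-r$), the leftmost column sum is $(1-r)+\ell$: server $1$ still receives the full inflow $\lambda\ell$ from server $2$, whose nearest left neighbour is server $1$, while retaining $\lambda(1-r)$ of its own arrivals. This exceeds $1$ whenever $\ell>r$ (symmetrically, the rightmost column sum is $(1-\ell)+r$), so for $\ell\neq r$ and $\lambda$ sufficiently close to $\mu$ one boundary server has $\leff>\mu$ and hence $\cO_N=1/N\neq 0$. The paper's own proof disposes of this case with the sentence that ``a similar computation'' gives $\leff(i)=\lambda$ at the boundary, which holds only when $\ell=r$ or under a different boundary convention (for instance, discarding rather than retaining the blocked fraction $\ell$ at the left end). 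So completing your argument honestly either requires such a restriction or shows that the uniform bound you are after actually fails at one endpoint; in either case the boundary step cannot be left as an unverified claim.
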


\section{Nearest neighbour graphs}
\label{sec:prelims}
In this section, we provide some background knowledge required in order to prove the main results. The proof of Theorem \ref{thm:main1d} relies on the distribution of order statistics which is detailed in Section \ref{sec:order_stats}. Section \ref{sec:knn_graph} provides some results on $k$-NN graphs that are required to prove Theorem \ref{thm:main2d}.

\subsection{Order statistics}
\label{sec:order_stats}

The primary tool that we will use in the $1$D case are the order statistics of the server locations which we define below.
Let $X_1, \dots, X_N$ be iid random variables from a distribution $F(\cdot)$ with density $f(\cdot)$ supported on $[0,1]$. When arranged in the order of magnitude and then written as $X_{(1)} \le X_{(2)} \le \cdots \le X_{(N)}$, the random variable $X_{(i)}$ is called the $i$-th \emph{order statistic}, and together they are referred to as the \emph{order statistics} of $X_1, \dots, X_N$.

The following are some facts about order statistics that will be used in our analysis (see e.g., \citep{david2004order,ross2010first}). These are stated for $f(x) =  \1\{x\in [0,1]\}$ and $F(x) = \int_{0}^xf(x)dx$ since in our case the servers are distributed uniformly in $[0,1]$. 
\begin{itemize}[leftmargin=*]
\item \textbf{Fact 1:} Conditioned on $X_{(i)} = a$, the random variables $X_{(1)}, \dots, X_{(i-1)}$ are the order statistics of
\begin{equation}
 Y_1, \dots, Y_{i-1}
 \stackrel{\text{iid}}{\sim}
 \text{Unif}\left([0,a]\right),
 \label{eq:cond_order_stat_lower}
\end{equation}
and $X_{(i+1)}, \dots, X_{(N)}$ are the order statistics of
\begin{equation}
 Y_{i+1}, \dots, Y_N
 \stackrel{\text{iid}}{\sim}
 \text{Unif} \left([a,1]\right).
 \label{eq:cond_order_stat_upper}
\end{equation}
Moreover, $X_{(1)}$, \dots, $X_{(i-1)}$ and
$X_{(i+1)}, \dots, X_{(N)}$ are conditionally independent given $X_{(i)}$.

\item \textbf{Fact 2:} The joint density of the $i$-th and the $j$-th order  statistic for $i<j$ is given by
\begin{align}
 f_{i,j}(r,s)
 &= \frac{N!}{(i-1)!(j-i-1)!(N-j)!}F(r)^{i-1}f(r) \nonumber \\
 &\ \hspace{2cm} \times \left[F(s)-F(r)\right]^{j-i-1}f(s)[1-F(s)]^{N-j}.
 \label{eq:dist_order_stats}
\end{align}
\end{itemize}

\subsection{$k$-NN graph}
\label{sec:knn_graph}

To characterize the $(k,p)$-NNS strategy for general dimension, we map the problem onto a graph and use results from stochastic geometry.

\begin{defn}
The \emph{$k$-NN graph} $G^N_{d,k}=(V,E)$ is a directed graph on the vertex set $V=\{X_1,\cdots,X_N\}$ with $X_i \in [0,1]^d$,
and the edge set is constructed in the following way:
add a directed edge
$i\to j$, if node $j$ is one of the $k$-nearest neighbours of node $i$.     
\label{defn:knn_graph}
\end{defn}

In this work, the vertex set $V$ for the $k$-NN graph is taken to be $N$ nodes distributed uniformly within $[0,1]^d$. The \emph{in-degree} of a vertex $i$ is the number of incoming edges to $i$, i.e., $|\{j: j\to i\}|$ and the \emph{out-degree} is the number of outgoing edges from $i$ i.e., $ |\{j: i\to j\}|$. Some properties of the $k$-NN graph (see e.g., \citep{bahadirNumberWeaklyConnected2021}) are listed below:
\begin{enumerate}
\item Every node $i \in G^N_{d,k}$ has out-degree $k$.
\item \label{kissing}
Every node $i \in G^N_{d,k}$ has in-degree at most $\alpha_d k$,
where $\alpha_d$ is the geometric constant defined in Section~\ref{sec:MainResultsForGeneralDimension}.

\end{enumerate}
A directed graph $H_1 = (V_1,E_1)$ is said to be a \emph{subgraph} of $H_2 = (V_2,E_2)$, denoted $H_1 \subseteq H_2$, if $V_1 \subseteq V_2$ and $E_1 \subseteq E_2$. Two directed graphs $H_1 $ and $H_2 $ are said to be \emph{isomorphic} to each other if there exists a bijection $f:V_1 \to V_2$ such that $(i,j) \in E_1$ if and only if $(f(i),f(j)) \in E_2$. A directed graph is said to be \emph{weakly connected} if the graph obtained on replacing the directed edges with undirected edges is connected. A subgraph that is weakly connected is referred to as a \emph{component}.

Let $I_D^N$ be the number of directed subgraphs of $G_{d,k}^N$ that are isomorphic to
a particular directed graph $D$.
The convergence of these subgraph counts is characterized in
\citep{bahadirNumberWeaklyConnected2021} which is reproduced below.
Here $\cN(\mu,\sigma^2)$ denotes a normal distribution with mean $\mu$ and variance $\sigma^2$.

\begin{thm}[{\citep[Theorem 2.1]{bahadirNumberWeaklyConnected2021}}]
For any weakly connected directed graphs $D_1,\cdots, D_m$ and any real numbers $a_1,\cdots,a_m$
there exist constants $\xi$ and $\sigma$ such that
$I^N= a_1 I_{D_1}^N+a_2 I_{D_2}^N +\cdots + a_m I_{D_m}^N$ satisfies

\begin{align*}
 \frac{I^N}{N} \asto \xi
 \qquad\text{and}\qquad
 \frac{I^N - \E \big[I^N\big]}{\sqrt{N}} \dto \cN(0,\sigma^2)
 \qquad \text{as $N \to \infty$}.
\end{align*}
\label{thm:iso_conv}
\end{thm}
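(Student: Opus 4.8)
The plan is to realize $I^N$ as a sum of bounded, spatially localized scores and then appeal to the general limit theory for stabilizing functionals of a binomial point process. By linearity it suffices to treat a single subgraph count $I_D^N$, because a finite real combination of localized scores is again a localized score. Fix a weakly connected directed graph $D$ on vertices $\{1,\dots,v\}$ and single out a canonical root of $D$. For each node $i$ let $\xi_N(i)$ be the (suitably weighted, i.e.\ divided by $|\mathrm{Aut}(D)|$) number of embeddings of $D$ into $G^N_{d,k}$ that send the root to $i$, so that $I_D^N = \sum_{i=1}^N \xi_N(i)$. The first key point is a uniform bound $0 \le \xi_N(i) \le C(D,d,k)$: every node has out-degree exactly $k$ and, by the in-degree bound (property~\ref{kissing}), in-degree at most $\alpha_d k$; hence the number of nodes within graph-distance $\mathrm{diam}(D)$ of $i$ is bounded by a constant, so only boundedly many copies of $D$ can pass through $i$.

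The central step is to show that $\xi_N(i)$ is \emph{exponentially stabilizing}. Define $R(i)$ as the smallest radius such that $\xi_N(i)$ is a deterministic function of the points lying in the Euclidean ball $B(X_i,R(i))$. Since $I_D^N$ counts (not necessarily induced) subgraph copies, $\xi_N(i)$ is determined once we know, for every node $u$ that can belong to a copy through $i$, which points are its $k$ nearest neighbours, so that each required edge $\phi(a)\to\phi(b)$ can be certified via $\phi(b)\in\cN_k(\phi(a))$. All nodes of a copy lie within bounded graph distance of $i$, and the in-degree bound caps the number of in-neighbours that must be inspected, so $R(i)$ is governed by boundedly many $k$-nearest-neighbour radii. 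Using that $X_1,\dots,X_N$ are i.i.d.\ uniform, the probability that the $k$-th nearest neighbour of a fixed point exceeds distance $r$ decays like a Poisson tail $\exp(-cNr^d)$ in the relevant range; chaining this over the finitely many nodes and certifying points gives $\P(R(i)>r)\le C_1\exp(-c_2 N r^d)$ uniformly in $i$ and $N$. This is the estimate that renders the scores effectively independent at macroscopic separation, and it is classical that $k$-nearest-neighbour graphs are exponentially stabilizing.

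With uniform boundedness and exponential stabilization established, I would invoke the Penrose--Yukich limit theory for stabilizing functionals of binomial input (the strong law of large numbers and the accompanying normal approximation). The bulk behaviour is governed by the limiting score $\xi_\infty$ of a point inserted into a homogeneous Poisson process on $\R^d$: the law of large numbers yields $I_D^N/N \asto \xi := \E[\xi_\infty]$, and the add-one-cost / two-point correlations of $\xi_\infty$ furnish the variance constant, giving $(I_D^N-\E[I_D^N])/\sqrt{N}\dto\cN(0,\sigma^2)$. Linearity then produces the stated constants for $I^N=\sum_j a_j I_{D_j}^N$ with real, possibly signed, $a_j$. The main obstacle is twofold and resides entirely in this final step. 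First, the boundary of $[0,1]^d$ breaks translation invariance, so one must check that points within $O(N^{-1/d})$ of $\partial[0,1]^d$ also stabilize (they do, with the same tail) and that this boundary shell, of size $O(N^{1-1/d})=o(N)$, perturbs the mean and variance only to lower order, so that $\xi$ and $\sigma^2$ are the interior constants. Second, the cleanest versions of these theorems are stated for a Poisson number of points, so a de-Poissonization argument is needed to transfer them to the model with exactly $N$ points; this is standard for strongly stabilizing bounded functionals. Verifying the stabilization tail uniformly up to the boundary is the part demanding the most care. An alternative route runs through Stein's method for sums with local dependence, but the random, geometry-dependent dependency structure of the copies makes the stabilization approach the more transparent one.
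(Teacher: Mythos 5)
This theorem is quoted verbatim from \citep[Theorem 2.1]{bahadirNumberWeaklyConnected2021}; the paper itself contains no proof of it, so there is nothing internal to compare against. Your reconstruction via stabilizing functionals --- bounded local scores (using out-degree $k$ and in-degree at most $\alpha_d k$), exponential stabilization of the $k$-nearest-neighbour radii, and the Penrose--Yukich LLN/CLT with de-Poissonization and boundary control --- is precisely the route taken in that reference and in the underlying stabilization literature, so the approach is sound. One phrasing to tighten: for the CLT you cannot literally ``reduce to a single $I_D^N$ by linearity,'' since marginal asymptotic normality of each $I_{D_j}^N$ does not imply normality of a signed combination; but your parenthetical fix --- treating $\sum_j a_j \xi_N^{(j)}(i)$ as one bounded, exponentially stabilizing score and applying the CLT to that single functional --- is the correct mechanism and is what the cited proof does.
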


The subgraph of our interest is the directed star graph $K_i= (V_i,E_i)$ with $V_i = \{1,2,\cdots,i+1\}$ and $E_i = \{(1,i+1),(2,i+1),\cdots,(i,i+1)\}$ as shown in Fig. \ref{fig:star}. The directed star graph captures the in-degree of a node which is key to our analysis.
\begin{figure}
\centering
\includegraphics[width=0.35\linewidth]{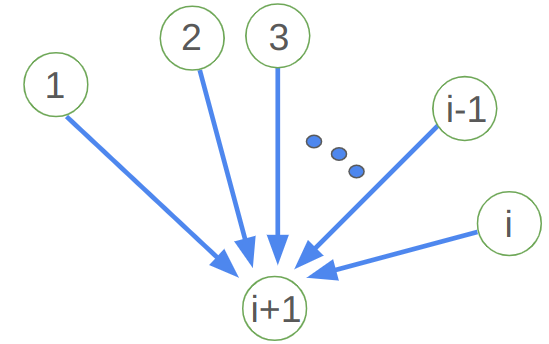}
\caption{The star graph $K_i$ formed by a node with in-degree $i$.}
\label{fig:star}
\end{figure}
Let $Q_{d,k,j}^N$ denote the number of nodes whose in-degree is $j$ in $G_{d,k}^N$.
The following proposition relates the quantities $Q_{d,k,j}^N$ and $I_{K_i}^N$. We include a proof of it here since it was omitted in \citep{bahadirNumberWeaklyConnected2021}.
\begin{prop}
For $N>\alpha_d k +1$,
\begin{equation}
 Q_{d,k,j}^N
 = \sum_{i=j}^{\alpha_d k}(-1)^{i-j}\binom{i}{j}I_{K_i}^N.
\end{equation}
\label{prop:indeg_iso}
\end{prop}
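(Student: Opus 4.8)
The plan is to first establish the \emph{forward} relation that writes each star count $I_{K_i}^N$ as a linear combination of the in-degree counts $Q_{d,k,n}^N$, and then to invert this relation by the standard binomial inversion formula.

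First I would count copies of $K_i$ by their central (in-)vertex. A subgraph of $G_{d,k}^N$ isomorphic to $K_i$ is precisely the choice of a centre vertex $c$ together with a set of $i$ distinct vertices each having a directed edge into $c$; since the only automorphisms of $K_i$ permute the $i$ leaves, the centre and its chosen set of in-neighbours determine the copy, and each such copy is counted exactly once. Hence a vertex $c$ whose in-degree equals $n$ contributes exactly $\binom{n}{i}$ copies of $K_i$. Because $N > \alpha_d k + 1$ guarantees that the $k$-NN graph is well defined and, by Property~\ref{kissing}, every in-degree lies in $\{0,1,\dots,\alpha_d k\}$, grouping the vertices by their in-degree gives
\[
 I_{K_i}^N \weq \sum_{n=i}^{\alpha_d k} \binom{n}{i}\, Q_{d,k,n}^N,
 \qquad 0 \le i \le \alpha_d k .
\]

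Next I would invert this triangular system. The relevant algebraic fact is the binomial inversion: if $b_i = \sum_{n \ge i} \binom{n}{i} a_n$ then $a_j = \sum_{i \ge j} (-1)^{i-j} \binom{i}{j} b_i$. Applying it with $b_i = I_{K_i}^N$ and $a_n = Q_{d,k,n}^N$ yields the claimed identity. To justify the inversion I would substitute the forward relation into the right-hand side of the claim and exchange the order of summation, reducing the computation to the orthogonality relation
\[
 \sum_{i=j}^{n} (-1)^{i-j} \binom{i}{j}\binom{n}{i} \weq \1\{n=j\},
\]
which follows from $\binom{n}{i}\binom{i}{j} = \binom{n}{j}\binom{n-j}{i-j}$ together with the binomial theorem $\sum_{\ell=0}^{n-j} (-1)^{\ell} \binom{n-j}{\ell} = (1-1)^{n-j}$, the latter vanishing unless $n=j$.

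The argument is essentially bookkeeping, so I do not anticipate a genuine obstacle. The only points that require care are (i) verifying that each isomorphic copy of $K_i$ is counted exactly once, i.e.\ that the leaf-permuting automorphisms cause no overcounting and that a centre together with its selected in-neighbours pins down the copy, and (ii) getting the summation ranges right, for which the in-degree bound $\alpha_d k$ from Property~\ref{kissing} (valid here since $N > \alpha_d k + 1$) is exactly what fixes the common upper limit of both sums.
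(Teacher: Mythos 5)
Your proposal is correct and follows essentially the same route as the paper: both establish the forward relation $I_{K_i}^N = \sum_{n=i}^{\alpha_d k}\binom{n}{i}Q_{d,k,n}^N$ by counting star subgraphs through their centre vertex, and then invert the resulting triangular system (the paper by writing it in matrix form and solving, you by explicit binomial inversion with the orthogonality identity). Your version merely spells out the counting and the inversion in more detail than the paper does.
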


\begin{proof}
Since $K_i \subseteq K_{i+1}$, the number of copies of $K_i$ in $G^N_{d,k}$ can be written as 
$$I_{K_i}^N = Q_{d,k,i}^N+\binom{i+1}{i}Q_{d,k,i+1}^N +\cdots + \binom{\alpha_d k}{i}Q_{d,k,\alpha_d k}^N.$$
Stacking these equations into a matrix form for $1\le i\le \alpha k$, we obtain
\begin{equation}
\begin{bmatrix}
I_{K_1}^N \\
I_{K_2}^N \\
\vdots \\
I_{K_{\alpha_d k-1}}^N\\
I_{K_{\alpha_d k}}^N
\end{bmatrix} = 
\begin{bmatrix}
1 & \binom{2}{1} &\cdots&\binom{\alpha_d k-1 }{1}&\binom{\alpha_d k }{1}\\
0 & 1  & \cdots&\binom{\alpha_d k-1 }{2}&\binom{\alpha_d k }{2}\\
\vdots &\vdots & \ddots &\vdots&\vdots \\
0 &0 & \cdots &1&\binom{\alpha_d k }{\alpha_d k -1}\\
0 &0 & \cdots &0 &1\\
\end{bmatrix}
\begin{bmatrix}
Q_{d,k,1}^N \\
Q_{d,k,2}^N \\
\vdots \\     
Q_{d,k,\alpha_d k-1}^N\\
Q_{d,k,\alpha_d k+1}^N
\end{bmatrix}.
\end{equation}
The system of linear equations can be solved for the variables $Q_{d,k,\cdot}^N$ to obtain the statement of the proposition.
\end{proof}
As a corollary of Proposition \ref{prop:indeg_iso} and Theorem \ref{thm:iso_conv}, we obtain the convergence of the random variables $Q_{d,k,j}^N$ as stated below.
\begin{corr}
For $0\le j\le \alpha_d k$, there exist constants $q_{d,k,j}$ and $ \sigma_{d,k,j}$
such that as $N \to \infty$,
\begin{align*}
 \frac{Q_{d,k,j}^N}{N} \asto q_{d,k,j}
 \quad\text{and}\quad
 \frac{Q_{d,k,j}^N - \E \big[Q_{d,k,j}^N\big]}{\sqrt{N}} \dto \cN(0,\sigma_{d,k,j}^2).
\end{align*}

\label{thm:indeg_frac}
\end{corr}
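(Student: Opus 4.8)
The plan is to deduce the result as an almost immediate consequence of the two ingredients already assembled, namely the exact inversion formula of Proposition~\ref{prop:indeg_iso} and the joint law of large numbers and CLT for linear combinations of weakly connected subgraph counts in Theorem~\ref{thm:iso_conv}. The point is that $Q_{d,k,j}^N$ is, up to a bounded number of terms, a \emph{fixed} linear combination of the star counts $I_{K_i}^N$ with coefficients that do not depend on $N$, and each star $K_i$ is weakly connected, so Theorem~\ref{thm:iso_conv} applies verbatim.

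First I would fix an integer $j$ with $1 \le j \le \alpha_d k$ and invoke Proposition~\ref{prop:indeg_iso} to write
\[
 Q_{d,k,j}^N \weq \sum_{i=j}^{\alpha_d k} (-1)^{i-j}\binom{i}{j}\, I_{K_i}^N ,
\]
which for $N > \alpha_d k + 1$ is a linear combination of at most $\alpha_d k$ subgraph counts with coefficients $a_i := (-1)^{i-j}\binom{i}{j}$ independent of $N$. Next I would verify the only hypothesis of Theorem~\ref{thm:iso_conv} that is not automatic: each directed star $K_i$ with $i\ge 1$ is weakly connected, since replacing its $i$ directed edges $\{(t,i+1): 1\le t\le i\}$ by undirected ones yields an undirected star on $i+1$ vertices, which is connected. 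Applying Theorem~\ref{thm:iso_conv} with $D_i = K_i$ and the coefficients $a_i$ then directly delivers constants $q_{d,k,j}$ and $\sigma_{d,k,j}$ (the $\xi$ and $\sigma$ it produces for this particular combination) with $Q_{d,k,j}^N/N \asto q_{d,k,j}$ and $(Q_{d,k,j}^N - \E[Q_{d,k,j}^N])/\sqrt{N} \dto \cN(0,\sigma_{d,k,j}^2)$.

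It remains to cover the boundary case $j=0$, for which the star $K_0$ degenerates to a single isolated vertex; rather than lean on this degenerate count I would use the conservation identity $Q_{d,k,0}^N = N - \sum_{j=1}^{\alpha_d k} Q_{d,k,j}^N$, valid because every vertex has in-degree between $0$ and $\alpha_d k$ by property~\ref{kissing} of the $k$-NN graph. Since $N$ is deterministic, the centred and rescaled quantity $(Q_{d,k,0}^N - \E[Q_{d,k,0}^N])/\sqrt{N}$ equals $-\sum_{j=1}^{\alpha_d k}(Q_{d,k,j}^N - \E[Q_{d,k,j}^N])/\sqrt{N}$, which is once more a fixed linear combination of star counts, so Theorem~\ref{thm:iso_conv} yields $\sigma_{d,k,0}$; the almost sure limit $Q_{d,k,0}^N/N \asto 1 - \sum_{j\ge 1} q_{d,k,j} =: q_{d,k,0}$ follows by summing the individual limits.

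The argument is short because the analytic work is done upstream, so I expect the only delicate points to be bookkeeping rather than probability: ensuring that the constants supplied by Theorem~\ref{thm:iso_conv} are read off for the correct combination at each $j$ (they depend on the combination, hence the $j$ subscript), and dispatching $j=0$ without invoking an empty or disconnected subgraph. I would also record one useful upgrade: Theorem~\ref{thm:iso_conv} gives the CLT for \emph{every} linear combination of the star counts, so by the Cram\'er--Wold device one obtains joint convergence of the vector $(Q_{d,k,0}^N,\dots,Q_{d,k,\alpha_d k}^N)$, which is precisely what is needed later when these counts are summed to form $\cO_N$ in Theorem~\ref{thm:main2d}.
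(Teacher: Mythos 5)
Your proposal is correct and follows exactly the route the paper intends: the paper states this corollary as an immediate consequence of Proposition~\ref{prop:indeg_iso} (writing $Q_{d,k,j}^N$ as a fixed linear combination of the weakly connected star counts $I_{K_i}^N$) and Theorem~\ref{thm:iso_conv}, without writing out the details you supply. Your separate treatment of $j=0$ via the conservation identity is a sensible way to sidestep the degenerate star $K_0$, which the paper glosses over.
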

The value of the constant $q_{d,k,j}$ is hard to compute for general $k$. For the case of $k=1$, \citep{newman1983nearest} shows that
\begin{equation}
q_{d,1,j} = \frac{1}{j!}\sum_{i=0}^{\alpha_d -j} \frac{(-1)^i}{i!}C_{i+j}^d,
\label{eq:qjdk}
\end{equation}
where 
\[
C^d_{k}
\weq \int_{A_k^d} e^{- \text{vol} \big( B(u_1, \|u_1\|) \cup \cdots \cup B(u_k, \|u_k\|) \big)} du_1 \dots du_k,
\]
with $B(u,r)$ denoting the ball of radius $r$ centered at $u$,
\[
A_k^d
\weq \Big\{ (u_1,\dots, u_k) \in (\R^d)^k \colon \|u_i\| < \|u_j-u_i\| \ \text{for all $i \ne j$} \Big\},
\]
and $\text{vol}(B)$ denoting the Lebesgue measure of set $B$.

\section{Proof for general dimension}
\label{sec:2d}

In this section we present the proof of Theorem~\ref{thm:main2d}
characterising the fraction of overloaded service stations
for the $(k,p)$-NNS strategy in $[0,1]^d$.

\begin{proof}[Proof of Theorem \ref{thm:main2d}]

Assume that $\lambda < \mu$ and $0 < p \le 1$.
We denote by $\din(i)$ and $\dout(i)$ the in-degree and the out-degree
of node~$i$ in the $k$-NN graph of the servers.
Because the server locations are independently and uniformly distributed
in $[0,1]^d$, we see that all inter-server distances are unique
with probability one.  As a consequence, $0 \le \din(i) \le \alpha_d k$
and $\dout(i) = k$ for all $i$ almost surely, as explained in Section~\ref{sec:knn_graph}.
Recall that customers arriving at location $i$ join queue~$i$ with
probability $1-p$, and customers arriving at location $j$ with $i\in \cN_k(j)$
join queue~$i$ with probability $\frac{p}{k}$.
Because the arrival processes are stationary and mutually independent with intensity $\lambda$, it follows that the net rate of customers joining queue~$i$ equals $\leff(i) = \lambda (1-p) + \lambda \din(i) \frac{p}{k}$.
We conclude queue~$i$ is overloaded if and only if
\[
 \lambda (1-p) + \lambda \din(i) \frac{p}{k} > \mu,
\]
or equivalently, $\din(i) > \theta$ where
$
 \theta = k + \frac{k}{p} \left( \frac{\mu}{\lambda} -1 \right).
$
As a consequence, the fraction of overloaded queues in the system can be written as
\[
 \cO_N = \frac{1}{N} \sum_{n: n > \theta} Q^N_{d,k,n},
\]
where $Q^N_{d,k,n}$ equals the number of servers with in-degree $n$.
Because $\din(i) \le \alpha_d k$, we note that
$Q^N_{d,k,n} = 0$ for $n > \alpha_d k$ almost surely. Therefore,
with probability one,
\begin{equation}
 \label{eq:OverloadFractionSum}
 \cO_N
 =
 \begin{cases}     
  \frac{1}{N} \sum_{n = \floor{\theta}+1}^{\alpha_d k} Q^N_{d,k,n},
  &\qquad \theta < \alpha_d k, \\
  0, &\qquad \theta \ge \alpha_d k.
 \end{cases}
\end{equation}
We also note that $\theta \ge \alpha_d k$ if and only if
$\frac{\lambda}{\mu} \le \frac{1}{1-p+\alpha_d p}$.
Hence $\cO_N = 0$ a.s.\ for $\frac{\lambda}{\mu} \le \frac{1}{1-p+\alpha_d p}$.

Let us now assume that $\frac{1}{1-p+\alpha_d p} < \frac{\lambda}{\mu} \le 1$.
Then $\theta < \alpha_d k$.
Corollary~\ref{thm:indeg_frac} then implies that
\[
 \cO_N
 \asto \sum_{n=\floor{\theta}+1}^{\alpha_d k} q_{d,k,n}
 \qquad \text{as $N \to \infty$}.
\]
Moreover, Proposition~\ref{prop:indeg_iso} implies that
\[
 \sum_{n=\floor{\theta}+1}^{\alpha_d k}Q^N_{d,k,n}
 = \sum_{n=\floor{\theta}+1}^{\alpha_d k}
   \sum_{m=n}^{\alpha_d k}(-1)^{m-n}\binom{m}{n}I_{K_m}^N
 = \sum_{m=\floor{\theta}+1}^{\alpha_d k}
 \Bigg(\sum_{n=\floor{\theta}+1}^{m}(-1)^{m-n}\binom{m}{n}\Bigg) I_{K_m}^N.
\]
We conclude that the fraction of overloaded servers can be written as
\[
 \cO_N
 \weq \frac{1}{N} \sum_{m=1}^{\alpha_d k} a_m I^N_{K_m},
\]
where
\[
 a_m
 =
 \begin{cases}
  \sum_{n=\floor{\theta}+1}^{m}(-1)^{m-n}\binom{m}{n}, &\qquad m \ge \floor{\theta}+1, \\
  0, &\qquad m \le \floor{\theta}.
 \end{cases}
\]
Theorem~\ref{thm:iso_conv} then asserts that
$\sqrt{N}\big(\cO_N-\E\cO_N\big)\dto \cN(0,\sigma^2)$
for some constant~$\sigma$.
\end{proof}

\section{Proofs for one dimension}
\label{sec:1d}

The case of dimension $d=1$ is interesting in its own right since it can be motivated by numerous applications. With reference to the examples discussed in the introduction, EV charging stations located on a highway and checkout counters at supermarkets correspond to queues on a $1$D space.

\subsection{In-degree frequencies in the one-dimensional 1-NN graph}

In this section we analyse the in-degree frequencies in the 1-NN graph $G_{1,1}^N$
in dimension $d=1$,
which will be useful in characterizing the overloaded servers in one dimension.
In $G_{1,1}^N$, a particular vertex can have in-degrees $\{0,1,2\}$. This is because a node $i$ could be the nearest neighbour to at most two vertices on either side of $i$. Recall that $Q^N_{d,k,n}$ equals the number of nodes with in-degree $n$ in $G_{d,k}^N$. The following lemma describes the expected in-degree frequencies
in $G_{1,1}^N$.

\begin{prop}
For any $N \ge 4$, the expected proportions of nodes with in-degree $s=0,1,2$ in the 1-NN graph on $[0,1]$ are given by 
\[
 \E \bigg[\frac{Q^N_{1,1,0}}{N}\bigg]=\frac{1}{4},\ \ \
 \E \bigg[\frac{Q^N_{1,1,1}}{N}\bigg] = \frac{1}{2}, \ \ \text{ and } \ \ 
 \E \bigg[\frac{Q^N_{1,1,2}}{N}\bigg] = \frac{1}{4}.
\]
\label{prop:frac_indeg}
\end{prop}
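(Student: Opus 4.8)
The plan is to translate the nearest-neighbour directions into comparisons of consecutive gaps between order statistics, and then exploit the exchangeability of uniform spacings. Write the ordered locations as $X_{(1)} < \cdots < X_{(N)}$ and let $D_i = X_{(i+1)} - X_{(i)}$ for $1 \le i \le N-1$ denote the internal gaps. For an interior node $2 \le i \le N-1$, its nearest neighbour lies to the left when $D_{i-1} < D_i$ and to the right when $D_i < D_{i-1}$ (ties have probability zero), while node $1$ always points right and node $N$ always points left. Hence node $j$ receives an edge from its left neighbour $j-1$ exactly when $j-1$ points right, and from its right neighbour $j+1$ exactly when $j+1$ points left, so $\din(j)$ is the sum of these two indicators, each expressible through a gap comparison.

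First I would record a deterministic identity that removes one unknown. Every node has out-degree $1$, so the graph has exactly $N$ edges and $\sum_j \din(j) = N$; combined with $Q^N_{1,1,0} + Q^N_{1,1,1} + Q^N_{1,1,2} = N$ this gives $Q^N_{1,1,1} + 2 Q^N_{1,1,2} = N$, and subtracting yields $Q^N_{1,1,0} = Q^N_{1,1,2}$ as random variables, not merely in expectation. It therefore suffices to compute one expectation, say $\E[Q^N_{1,1,2}]$, and read off the other two.

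The key tool is that the $N+1$ spacings of $N$ uniform points on $[0,1]$ (including the two boundary spacings) are exchangeable, being uniform on the simplex. Consequently any strict comparison of two distinct internal gaps holds with probability $\tfrac12$, and for four distinct internal gaps all orderings are equally likely, so a pair of comparisons on disjoint index sets holds with probability $\tfrac14$. I would then write $\E[Q^N_{1,1,2}] = \sum_{j=2}^{N-1}\P(\din(j)=2)$ and evaluate term by term. For a generic interior node $3 \le j \le N-2$, the event $\{\din(j)=2\}$ equals $\{D_{j-2} > D_{j-1}\} \cap \{D_j < D_{j+1}\}$, involving four distinct gaps, hence has probability $\tfrac14$. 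The two nodes $j=2$ and $j=N-1$ are special: one required direction is forced by the boundary (node $1$ always points right, node $N$ always points left), leaving a single comparison of two distinct gaps and hence probability $\tfrac12$ each. Summing gives $\E[Q^N_{1,1,2}] = \tfrac12 + \tfrac12 + (N-4)\tfrac14 = \tfrac N4$, where the hypothesis $N \ge 4$ guarantees the boundary-adjacent and generic cases are disjoint.

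Combining with the deterministic identity then yields $\E[Q^N_{1,1,0}] = \E[Q^N_{1,1,2}] = N/4$ and hence $\E[Q^N_{1,1,1}] = N - N/2 = N/2$, giving the stated proportions $\tfrac14, \tfrac12, \tfrac14$. I expect the only real care to be the boundary bookkeeping, namely correctly recognizing that nodes $2$ and $N-1$ contribute $\tfrac12$ rather than $\tfrac14$ because one incoming direction is deterministic, together with the clean justification of the $\tfrac14$ probability, which exchangeability supplies with no integration required. One could instead integrate the joint gap density coming from Fact~2, but exchangeability makes that unnecessary.
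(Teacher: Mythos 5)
Your proof is correct, but it reaches the result by a genuinely different route than the paper. The paper conditions on $X_{(i)}=x$, invokes the conditional independence of the left and right order statistics (Fact 1), and then evaluates the probability $\tfrac14$ for an interior node by explicitly integrating the joint density \eqref{eq:succ_order_stats} of consecutive order statistics; it then handles the other two in-degrees by ``similar computations.'' You instead phrase everything in terms of the internal spacings $D_i$ and use the exchangeability of uniform spacings, which delivers the probabilities $\tfrac12$ (one comparison of two distinct gaps) and $\tfrac14$ (two comparisons on disjoint gap pairs, independent because the rank vector of an exchangeable tie-free family is uniform over permutations) with no integration at all. The boundary bookkeeping is identical in both arguments: two boundary-adjacent nodes contribute $\tfrac12$ each and the $N-4$ generic interior nodes contribute $\tfrac14$ each, so the hypothesis $N\ge 4$ enters in the same way. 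Your other deviation is an improvement in economy: the identity $\sum_j \din(j)=N$ (every node has out-degree one) combined with $Q^N_{1,1,0}+Q^N_{1,1,1}+Q^N_{1,1,2}=N$ gives $Q^N_{1,1,0}=Q^N_{1,1,2}$ almost surely, which both dispenses with the ``similar computations'' for the remaining two expectations and reproves the paper's Proposition~\ref{prop:r0r2} by a degree-count argument instead of the paper's classification of weakly connected components by their $2$-cycles. What the paper's integral approach buys is a template that generalizes to conditional statements about order statistics where exchangeability of spacings is not available; what your approach buys is brevity and the a.s.\ identity between $Q^N_{1,1,0}$ and $Q^N_{1,1,2}$ for free. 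I see no gap in your argument.
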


\begin{proof}
    Let $X_{(1)}, \cdots , X_{(N)}$ denote the order statistics of the locations of the $N$ servers. In the following discussion, \emph{node} $i$ will refer to the $i$-th order statistic. 
For $3\le i \le N-2$, node $i$ has no incoming edges in $G_{1,1}^N$ if and only if both the events $E_i^+ := \{X_{(i+2)} - X_{(i+1)} < X_{(i+1)} - X_{(i)}\}$ and $E_i^- := \{X_{(i-1)} - X_{(i-2)} < X_{(i)} - X_{(i-1)}\}$ occur simultaneously. We first compute the probability of the event $E_i^+ \cap E_i^-$. 

From Fact 1 of Section \ref{sec:order_stats}, conditioned on the value of $X_{(i)}=x$, the events $E_i^+$ and $ E_i^-$ are independent. Thus
\begin{align}
\P(E_i^+ \cap E_i^-)&
= \int_{0}^{1}\P(X_{(i+2)} >2X_{(i+1)} - x|X_{(i)}=x) \nonumber\\
& \qquad \qquad\P( X_{(i-2)}> 2X_{(i-1)} -x|X_{(i)}=x)f_{X_{(i)}}(x)dx,
\label{eq:zero_q}
\end{align}
where $f_{X_{(i)}}(\cdot)$ is the density of the $i$-th order statistic. We will now evaluate each of the probabilities in the above expression using \eqref{eq:cond_order_stat_lower}, \eqref{eq:cond_order_stat_upper} and \eqref{eq:dist_order_stats}. For this, we first write down the joint distribution of the $j$-th and the $(j+1)$-th order statistic using \eqref{eq:dist_order_stats} to be
\begin{equation}
f_{j,j+1}(r,s)= N!\ \frac{F(r)^{j-1}f(r)f(s)(1-F(s))^{N-j-1}}{(j-1)!(N-j-1)!}.
\label{eq:succ_order_stats}
\end{equation}
Using \eqref{eq:cond_order_stat_lower}, \eqref{eq:cond_order_stat_upper} with \eqref{eq:succ_order_stats} the required integral can be computed as follows: (see Fig. \ref{fig:integral})
\begin{figure}
\centering
\includegraphics[width=0.65\linewidth]{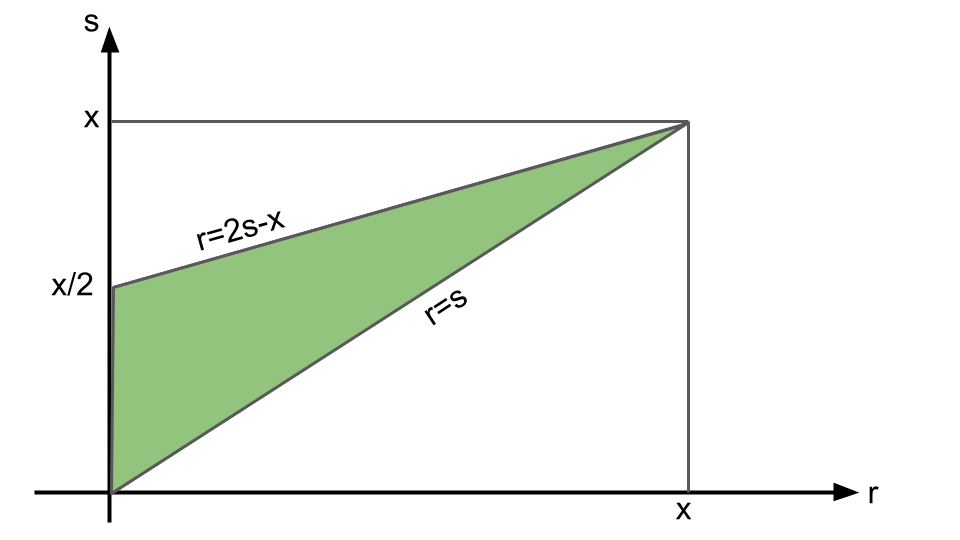}
\caption{Region of integration.}
\label{fig:integral}
\end{figure}
\begin{align*}
\P( X_{(i-2)}&> 2X_{(i-1)} -x|X_{(i)}=x)\\
&=\int_{0}^{x}\int_{0}^{s}\frac{(i-1)!}{(i-3)!}\left(\frac{r^{i-3}}{x^{i-1}}\right)\ dr ds -\int_{\frac{x}{2}}^{x}\int_{0}^{2s-x}\frac{(i-1)!}{(i-3)!}\left(\frac{r^{i-3}}{x^{i-1}}\right)\ dr ds\\
&=\int_{0}^{x}\frac{(i-1)(i-2)}{x^{i-1}}\left(\frac{s^{i-2}}{i-2}\right) ds -\int_{\frac{x}{2}}^{x}\frac{(i-1)(i-2)}{x^{i-1}}\left(\frac{(2s-x)^{i-2}}{i-2}\right) ds\\
&=\frac{i-1}{x^{i-1}}\int_{0}^{x}s^{i-2}\  ds -\frac{i-1}{x^{i-1}}\int_{\frac{x}{2}}^{x}(2s-x)^{i-2}\  ds\\
&=1-\frac{i-1}{x^{i-1}}\int_{\frac{x}{2}}^{x}(2s-x)^{i-2}\ ds = \frac{1}{2}.
\end{align*}
and similarly
$\P(X_{(i+2)} >2X_{(i+1)} - x|X_{(i)}=x) = \frac{1}{2}.$
Substituting this in \eqref{eq:zero_q}, we obtain
$$\P(E_i^+ \cap E_i^-) = \int_{0}^{1}\frac{f_{X_{(i)}}(x)}{4}dx = \frac{1}{4}.$$
Nodes $1$ and $N$ have no incoming edges if the event $E_1^+$ and $E_N^-$ are true respectively. Moreover, nodes $2$ and $N-1$ always have an incoming edge from nodes $1$ and $N$ respectively. Thus, $$\E \bigg[\frac{Q^N_{1,1,0}}{N}\bigg] = \frac{1}{N} \E\left[\1\{E_1^+\}+\1\{E_{N}^-\}+\sum_{i=3}^{N-2} \1\{E_i^+ \cap E_i^-\}\right]=\frac{1}{N}\bigg[2\cdot\frac{1}{2}+(N-4)\cdot\frac{1}{4}\bigg].$$
For $N\ge 4$, we obtain $\E \Big[\frac{Q^N_{1,1,0}}{N}\Big] = \frac{1}{4}$. Similar computations for $\E \Big[\frac{Q^N_{1,1,1}}{N}\Big]$ and $\E \Big[\frac{Q^N_{1,1,2}}{N}\Big]$
prove the proposition.
\end{proof}

In fact, a stronger statement to Proposition~\ref{prop:frac_indeg} holds for $Q^N_{1,1,0}$ and $Q^N_{1,1,2}$ which is stated below.

\begin{prop}
The number of vertices with in-degree $0$ and $2$ in the $1$-NN graph on $[0,1]$ 
are the same, that is, $Q^N_{1,1,0} = Q^N_{1,1,2}$ a.s.
\label{prop:r0r2}
\end{prop}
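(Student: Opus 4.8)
The plan is to avoid any further order-statistics computation and instead prove the identity as an exact conservation law via a double-counting (handshaking) argument on $G_{1,1}^N$. The starting point is the rigid degree structure recalled in Section~\ref{sec:knn_graph}: every node has out-degree exactly $\dout(i)=k=1$, so the total number of directed edges in $G_{1,1}^N$ equals $N$. On the other hand, since $\alpha_1=2$ (equivalently, a node can be the nearest neighbour of at most one vertex on each side), every in-degree lies in $\{0,1,2\}$ almost surely, which is precisely the range used to define $Q^N_{1,1,0}$, $Q^N_{1,1,1}$, and $Q^N_{1,1,2}$.

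Writing $Q_s := Q^N_{1,1,s}$, I would then record two elementary identities. Counting vertices according to their in-degree gives $Q_0+Q_1+Q_2=N$. Counting directed edges by their heads gives $\sum_i \din(i)=Q_1+2Q_2$, and since this sum also equals the total number of edges computed from the out-degrees, namely $\sum_i \dout(i)=N$, we obtain $Q_1+2Q_2=N$. Subtracting the first identity from the second yields $Q_2-Q_0=0$, that is, $Q^N_{1,1,0}=Q^N_{1,1,2}$.

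There is essentially no analytic obstacle here: the result is a purely combinatorial consequence of the in/out-degree balance and holds for every realisation of the server locations with pairwise-distinct inter-point distances. The only (routine) probabilistic input is that such distinct-distance configurations occur with probability one, which simultaneously guarantees that $\dout(i)\equiv 1$ and that $\din(i)\le 2$; hence the identity holds almost surely and for all $N$. Thus the hard part is really just recognising that the claim is a degree-conservation statement rather than something requiring the integral machinery of Section~\ref{sec:order_stats}. As a byproduct, this pathwise equality immediately re-derives $\E[Q^N_{1,1,0}]=\E[Q^N_{1,1,2}]$ observed in Proposition~\ref{prop:frac_indeg}, now strengthened to an almost-sure statement.
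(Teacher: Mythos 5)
Your proof is correct, and it takes a genuinely different route from the paper. The paper argues structurally: it classifies the weakly connected components of $G_{1,1}^N$ (a bare $2$-cycle, a chain feeding into a $2$-cycle from one side, or a chain with the $2$-cycle in the interior) and pairs the in-degree-$0$ vertices with the in-degree-$2$ vertices inside each component. You instead observe that $\sum_i \dout(i) = N$ forces $\sum_i \din(i) = Q_1 + 2Q_2 = N = Q_0 + Q_1 + Q_2$, and subtract. Your argument is shorter, needs only the two facts that $\dout(i)\equiv 1$ and $\din(i)\le 2$ almost surely (both already recorded in Section~\ref{sec:knn_graph}), and generalizes immediately: for the $1$-NN graph in any dimension $d$ the same double count gives $Q^N_{d,1,0} = \sum_{j\ge 2}(j-1)\,Q^N_{d,1,j}$, of which the one-dimensional identity is the special case $\alpha_1 = 2$. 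What the paper's component decomposition buys in exchange is an explicit local pairing of the deficient and saturated vertices within each component, which is structural information the counting identity does not provide (and which is in the spirit of the spatial-distribution questions raised in the conclusions); but for the identity as stated, your argument is complete and arguably preferable.
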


\begin{proof}
A crucial observation for the $1$-NN graph is that the only possible cycles are $2$-cycles comprising of two nodes that are mutual nearest neighbours (see \citep{eppstein1997nearest}). If all the components of the $1$-NN graph $G_1^N$ in one dimension are $2$-cycles as shown in Fig. \ref{fig:r0r2}(a), then $Q^N_{1,1,0} = Q^N_{1,1,2} = 0$. On the other hand, every component of more than two vertices contains exactly one $2$-cycle. Such components can be of two types: starting at a node of in-degree $0$ and culminating in the $2$-cycle as shown in Fig. \ref{fig:r0r2}(b) or starting and ending at a node of in-degree $0$ with a $2$-cycle in between as shown in Fig. \ref{fig:r0r2}(c). In the former case, since the starting vertex has in-degree $0$ and one of the vertices in the $2$-cycle associated with the component has in-degree $2$, every component of size greater than $2$ can be associated with such a unique pair of vertices with in-degrees of $0$ and $2$. Likewise, in the latter case, the component has two node in the $2$-cycle which have in-degree $2$ and the two nodes at the extremes of the component have in-degree $0$. Thus, in all the cases we have that $Q^N_{1,1,0} = Q^N_{1,1,2}$ almost surely which proves the claim.
\end{proof}

\begin{figure}
\centering
\subfloat[Case 1: $Q^N_{1,1,0}=Q^N_{1,1,2} = 0$]{
\includegraphics[width=0.5\linewidth]{./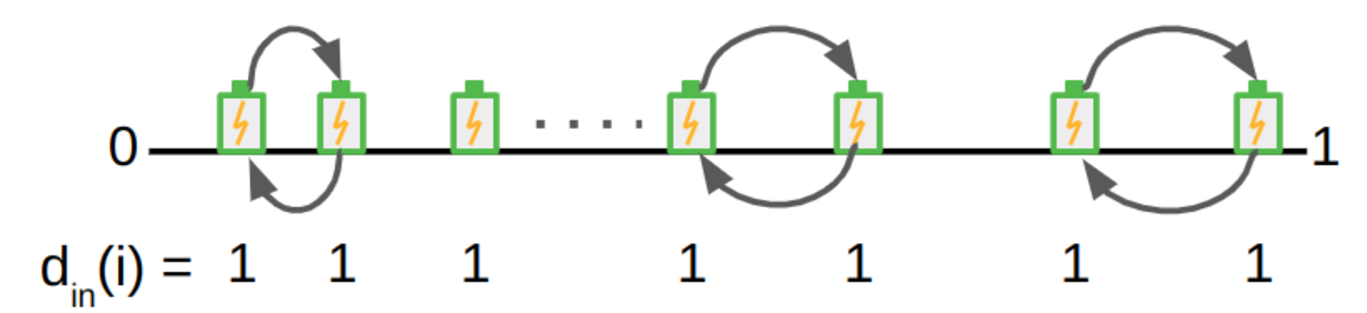}
\label{fig:R00}}\\
\subfloat[Case 2: $Q^N_{1,1,0}=Q^N_{1,1,2} = 1$]{
\includegraphics[width=0.5\linewidth]{./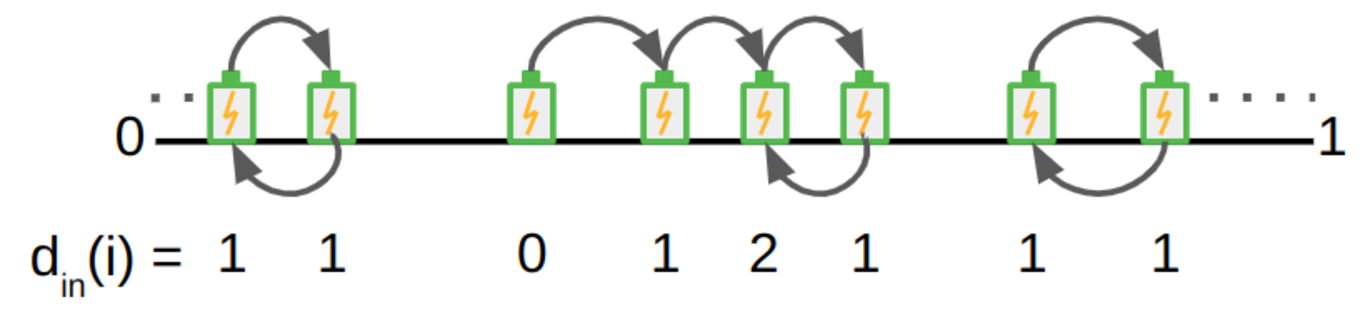}}
\subfloat[Case 3: $Q^N_{1,1,0}=Q^N_{1,1,2} = 2$]{
\includegraphics[width=0.5\linewidth]{./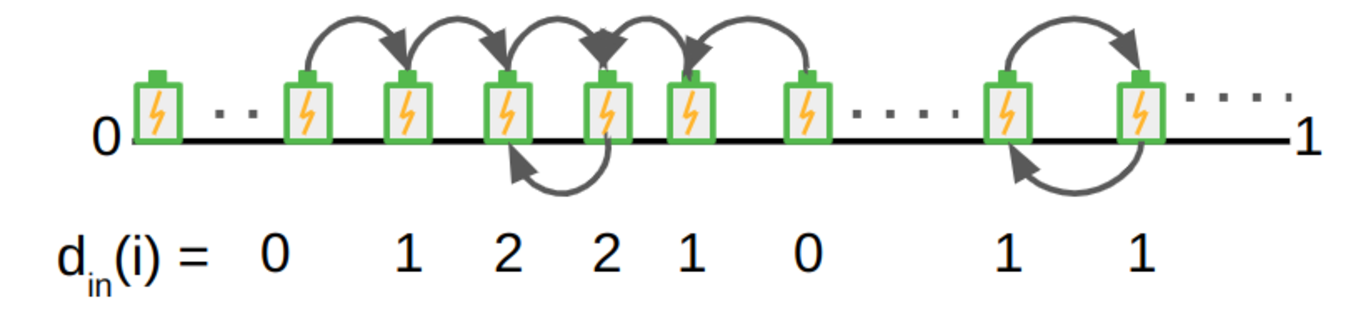}}
\caption{Illustration of $Q^N_{1,1,0}=Q^N_{1,1,2}$.  An arrow $i \to j$ indicates $j$ is the nearest neighbour of $i$.}
\label{fig:r0r2}
\end{figure}

\subsection{$(1,p)$-NNS strategy}
\label{sub:nns}
In this section, we characterize the overloaded servers for the $(1,p)$-NNS strategy in one-dimension by proving Theorem \ref{thm:main1d}.

\begin{proof}[Proof of Corollary \ref{thm:convergence_1d}]
Let $d=1$ and $k=1$. Then $\alpha_1 = 2$,
and by Theorem~\ref{thm:main2d} we find that
$\cO_N=0$ a.s.\ when $\frac{\lambda}{\mu} \le \frac{1}{1+p}$.

Assume next that $\frac{1}{1+p} < \frac{\lambda}{\mu} \le 1$.
Then the constant $\theta$ in Theorem~\ref{thm:main2d} equals
$\theta = 1 + \frac{1}{p} \left( \frac{\mu}{\lambda} -1 \right)$
and is bounded by $1 \le \theta < 2$.
Theorem~\ref{thm:main2d} then implies that
\[
 \cO_N \asto
 q_{1,1,2}
 \quad\text{and}\quad
 \sqrt{N} \big(\cO_N - \E \cO_N \big) \dto \cN(0,\sigma_{1,1}^2).
\]
The claim follows by plugging in the values of the constants
$q_{1,1,2} = \frac14$ and $\sigma_{1,1}^2 = \frac{19}{240}$ obtained in \citep{bahadir2016number}.
\end{proof}

\begin{proof}[Proof of Theorem~\ref{thm:main1d}]
Because $\frac{1}{1+p} < \frac{\lambda}{\mu} \le 1$,
we see that the constant $\theta$ in \eqref{eq:OverloadFractionSum}
is bounded by $1 \le \theta < 2$.
Because $\alpha_1 = 2$ for $d=1$, we see that \eqref{eq:OverloadFractionSum} 
takes the form $\cO_N = \frac{1}{N} Q^N_{1,1,2}$.
Proposition~\ref{prop:frac_indeg} now yields the result $\E[\cO_N] = \frac{1}{4}$.
\end{proof}

\begin{remark}
While Theorem \ref{thm:main1d} characterizes the fraction of overloaded servers for $N \ge 4$, it can be inferred exhaustively that $\E \cO_2 =0$ and $\E\cO_3 = \frac{1}{3}$.
\end{remark}

\subsection{Left-right strategy}
\label{sec:other_1d}

\begin{proof}[Proof of Theorem \ref{thm:lrnns}]
Consider a service station $i$ that is not located at the boundary of $[0,1]$.
Customers arriving at queue~$i$ join queue $i$ with probability $1-\ell-r$.
Customers arriving to the nearest queue to the left (resp.\ right) of $i$
join queue $i$ with probability $\ell$ (resp.\ right). Therefore, the
net rate of customers joining queue~$i$ equals
\[
 \leff(i)
 \ = \ \lambda(1-\ell-r) + \lambda \ell + \lambda r
 \ = \ \lambda.
\]
A similar computation shows that $\leff(i) = \lambda$ also for the
service stations located at the boundary of $[0,1]$. Therefore,
$\cO_N = 0$ a.s.
\end{proof}

\section{Numerical results}
\label{sec:simu}
In this section, we present simulation experiments complementing our theoretical results. We restrict to the case of $k=1$ since the constants in our results (or their approximation using Monte Carlo simulations) are known only for the $(1,p)$-NNS strategy. 

Consider $N=1000$ servers deployed uniformly in $[0,1]$, each of which is associated with an independent arrival process of intensity $\lambda = 1$. The arrival process is simulated for $t=1000$ time units with a shift probability $p$. The probability parameter $p$ is chosen from the set $\{1,0.75,0.5,0.25\}$. Fig.~\ref{fig:simu1d} plots the histogram of the observed number of overloaded servers ($Q^N_{1,1,2}$) and the servers with no change in load ($Q^N_{1,1,1}$) over $1000$ instantiations of the server locations in the one dimensional case. The number of overloaded servers is concentrated well around the mean of $N/4 = 250$ irrespective of the probability $p$ corroborating with Theorem \ref{thm:main1d}.

Fig. \ref{fig:simu2d} shows a similar histogram of the expected fraction of overloaded servers when the servers are distributed in $[0,1]^2$. The values of the constants $q_{2,1,j}$ for $j=0,1,\cdots,5$ have been computed using Monte Carlo simulations in \citep{cuzick1990spatial} which is reproduced in Table \ref{tab:values}.

\begin{table}
\centering
\small
\begin{tabular}{rc}
\toprule
$j$ & $q_j(2,1)$ \\
\midrule
$0$ & $2.84 \times 10^{-1}$ \\
$1$ & $4.63 \times 10^{-1}$ \\
$2$ & $2.22 \times 10^{-1}$ \\
$3$ & $3.04 \times 10^{-2}$ \\
$4$ & $6.56 \times 10^{-4}$ \\
$5$ & $1.90 \times 10^{-7}$ \\
\bottomrule
\end{tabular}
\caption{
Values of the asymptotic $k$-NN in-degree distribution $q_{d,k,j}$ for $d=2$ and $k=1$.
Recall that $q_{2,1,j}=0$ for $j \ge 6$.}
\label{tab:values}
\end{table}

When the servers are distributed in $2$D and customers follow the $(1,p)$-NNS strategy, Theorem \ref{thm:main2d} states that the fraction of overloaded servers converges to $ \sum_{j=2}^{5} q_{2,1,j}$ almost surely as $ N\to \infty$. Using the values from Table \ref{tab:values}, we obtain $\sum_{j=2}^{5} q_{2,1,j} \approx 0.252$ which is where the blue peak is located in Fig. \ref{fig:simu2d}. Further supporting the result is the peak observed of the fraction of servers with no change in load which is equal to $q_{2,1,1} = 0.463$.
\begin{figure}
\centering
\subfloat[$d=1$]{\includegraphics[width=0.5\linewidth]{./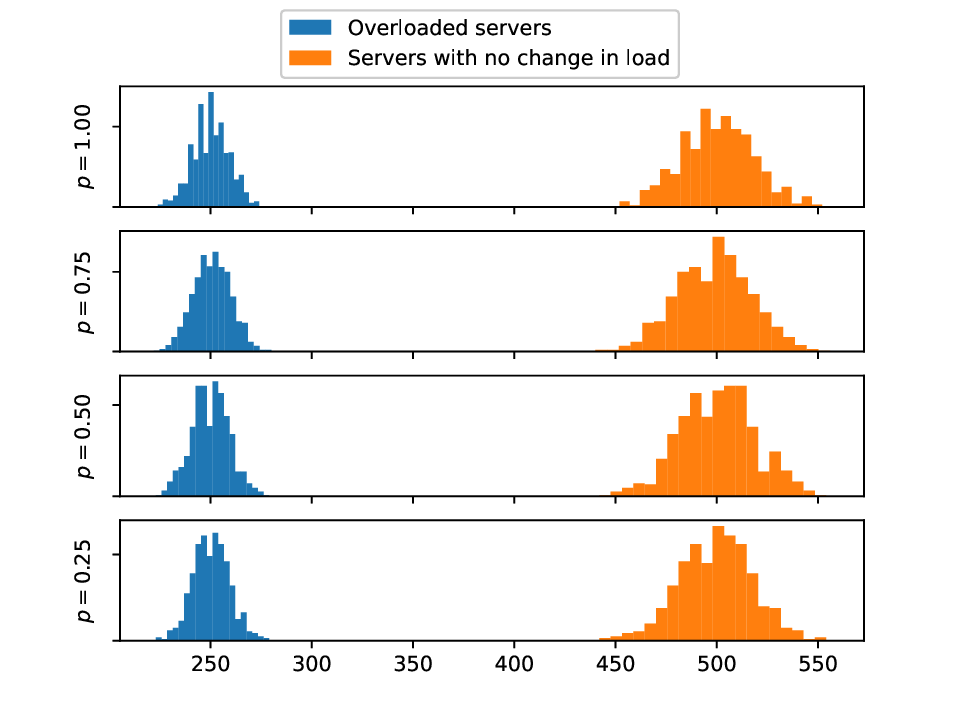}
\label{fig:simu1d}
}
\subfloat[$d=2$]{\includegraphics[width=0.5\linewidth]{./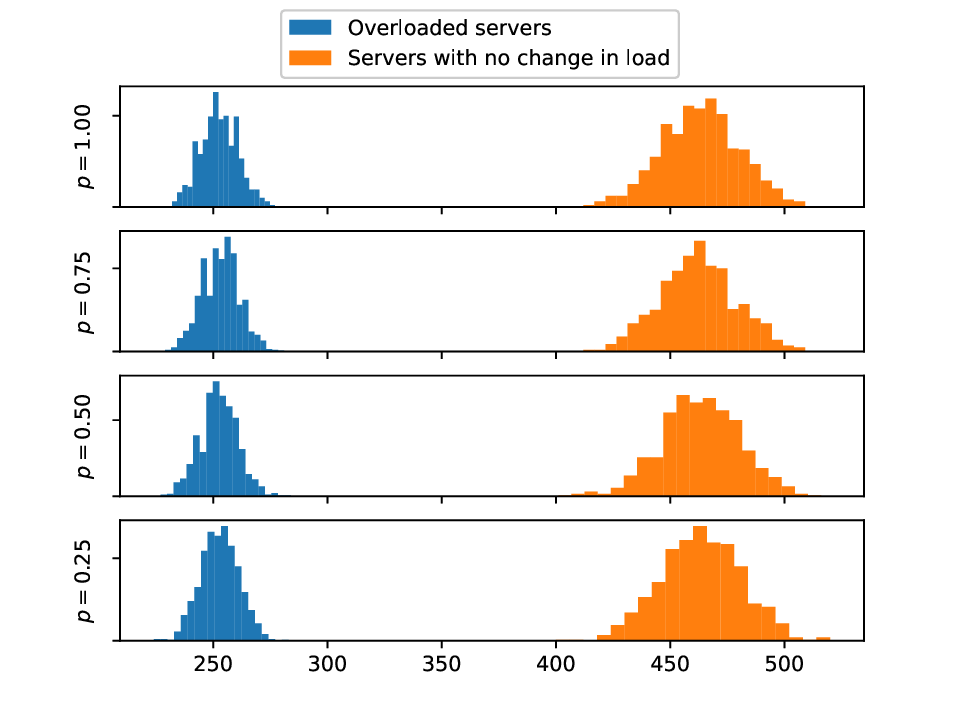}
\label{fig:simu2d}
}
\caption{Histogram of the number of overloaded servers and the servers with no change in load for different probability values of the $(1,p)$-NNS strategy in (a) one dimension and (b) two dimension.}
\end{figure}

\section{Conclusions and future work}
\label{sec:conc_fw}
In this paper, we considered multiple servers on a $d$-dimensional Euclidean space, where arriving customers follow a probabilistic policy to either remain in the queue or to join a nearest neighbour.  In this setting, we provided distributional properties of the fraction of overloaded servers in the stationary regime. Numerous questions remain yet to be answered in this setting, some of which are discussed below.

\subsection{Spatial distribution of overloaded servers.}

In the long run, overloaded servers require higher maintenance and allocating
more capacity.
This necessitates a characterization of their distribution in space. Fig. \ref{fig:spatial1d} and Fig. \ref{fig:spatial2d} show the distribution of the overloaded servers in space in one and two dimensions respectively. It is reasonable to expect that overloaded servers do not occur close to one another. For example, three adjacent servers in dimension one cannot be overloaded since their total in-degree is upper bounded by $5$ in $G_{1,1}^N$.
This suggests that the spatial point pattern of overloaded servers
resembles more a Mat\'ern point process than a standard Poisson point process \citep{Kuronen_Leskela_2013}.
The spatial distribution is of particular importance in the context of electric vehicles since it gives the distribution of locations of charging stations requiring maintenance.

\begin{figure}
\subfloat[$p=1$]{
\includegraphics[width=0.45\linewidth]{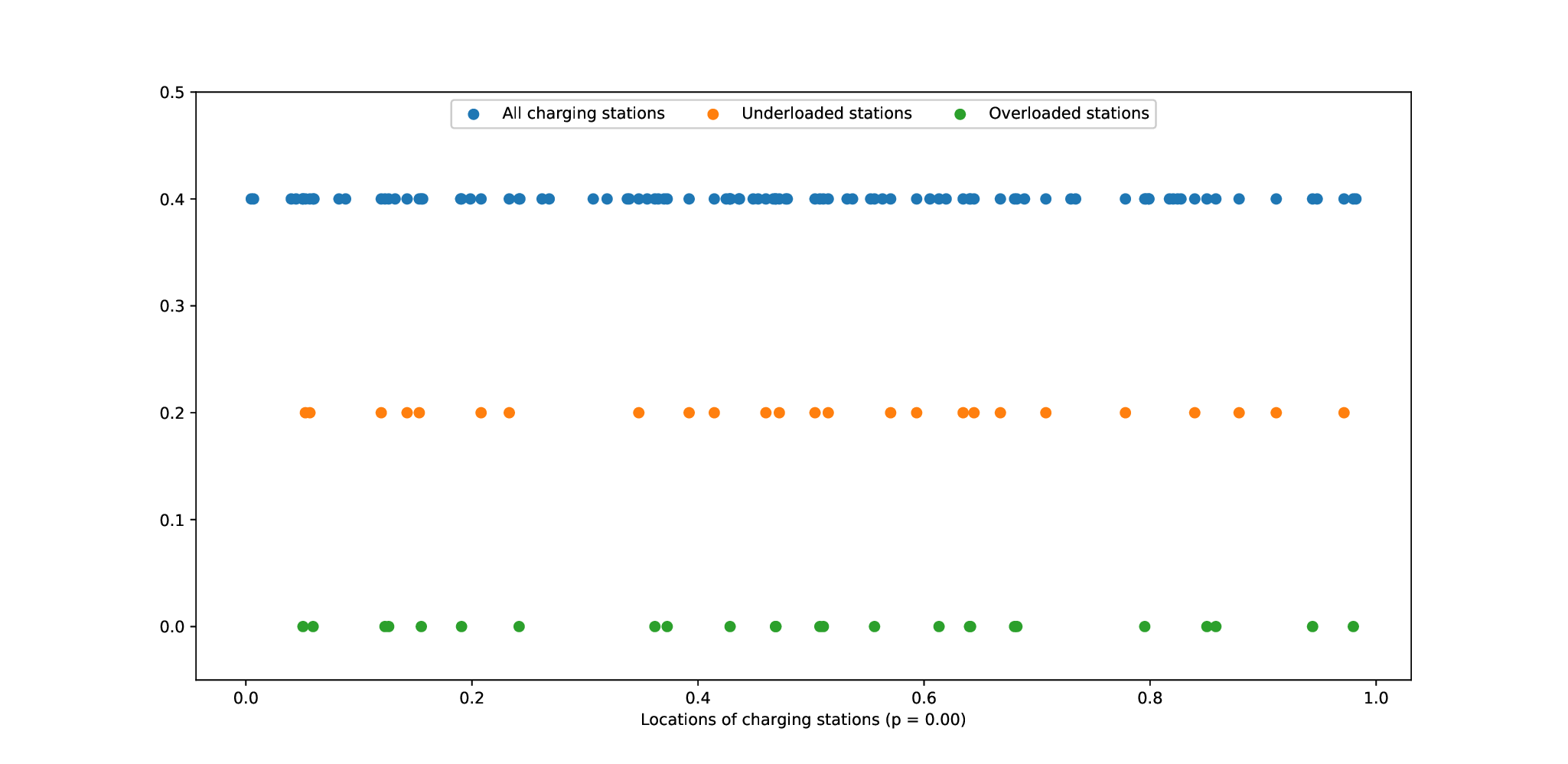}}
\subfloat[$p=0.75$]{
\includegraphics[width=0.45\linewidth]{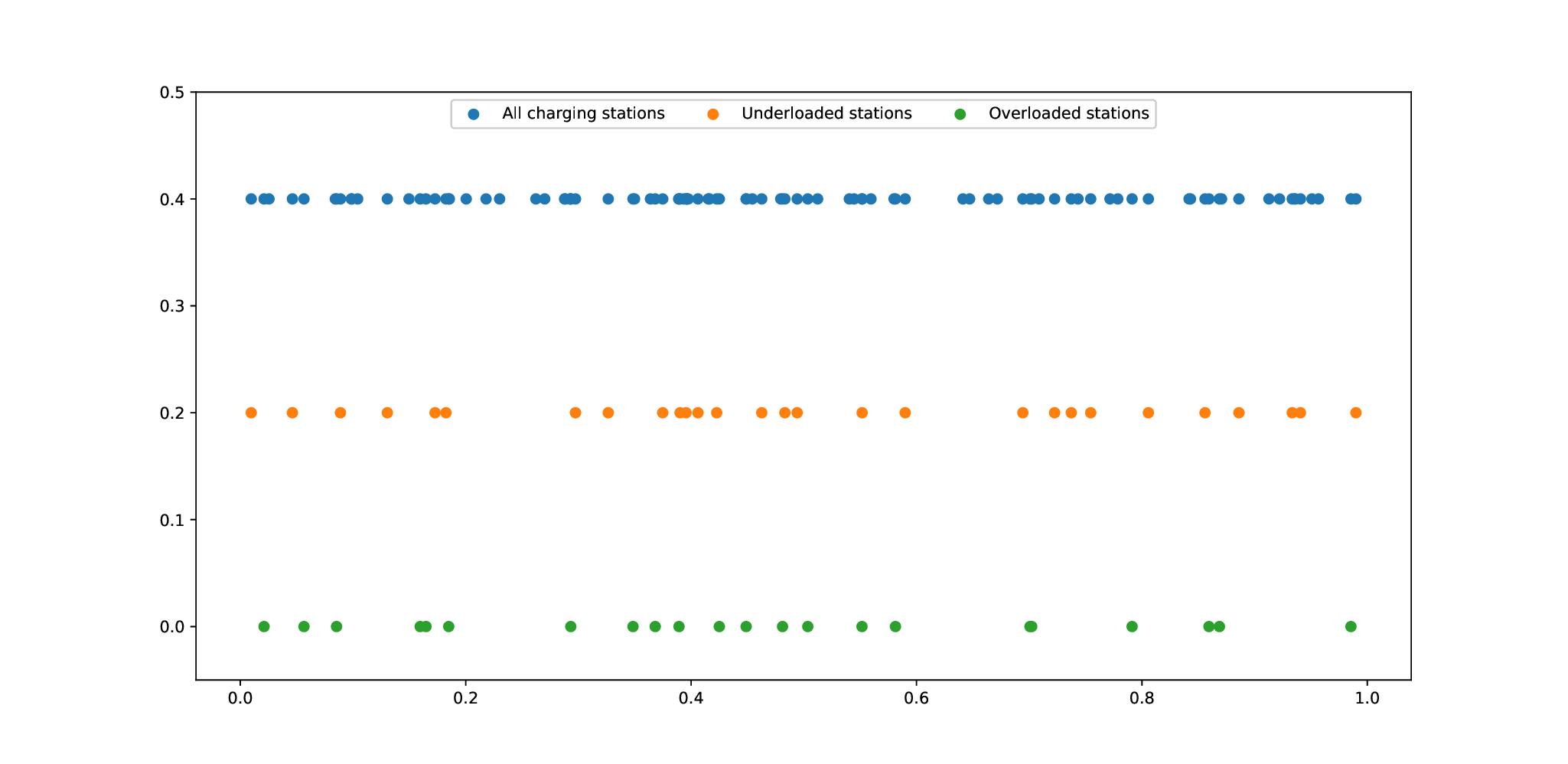}}\\
\subfloat[$p=0.60$]{
\includegraphics[width=0.45\linewidth]{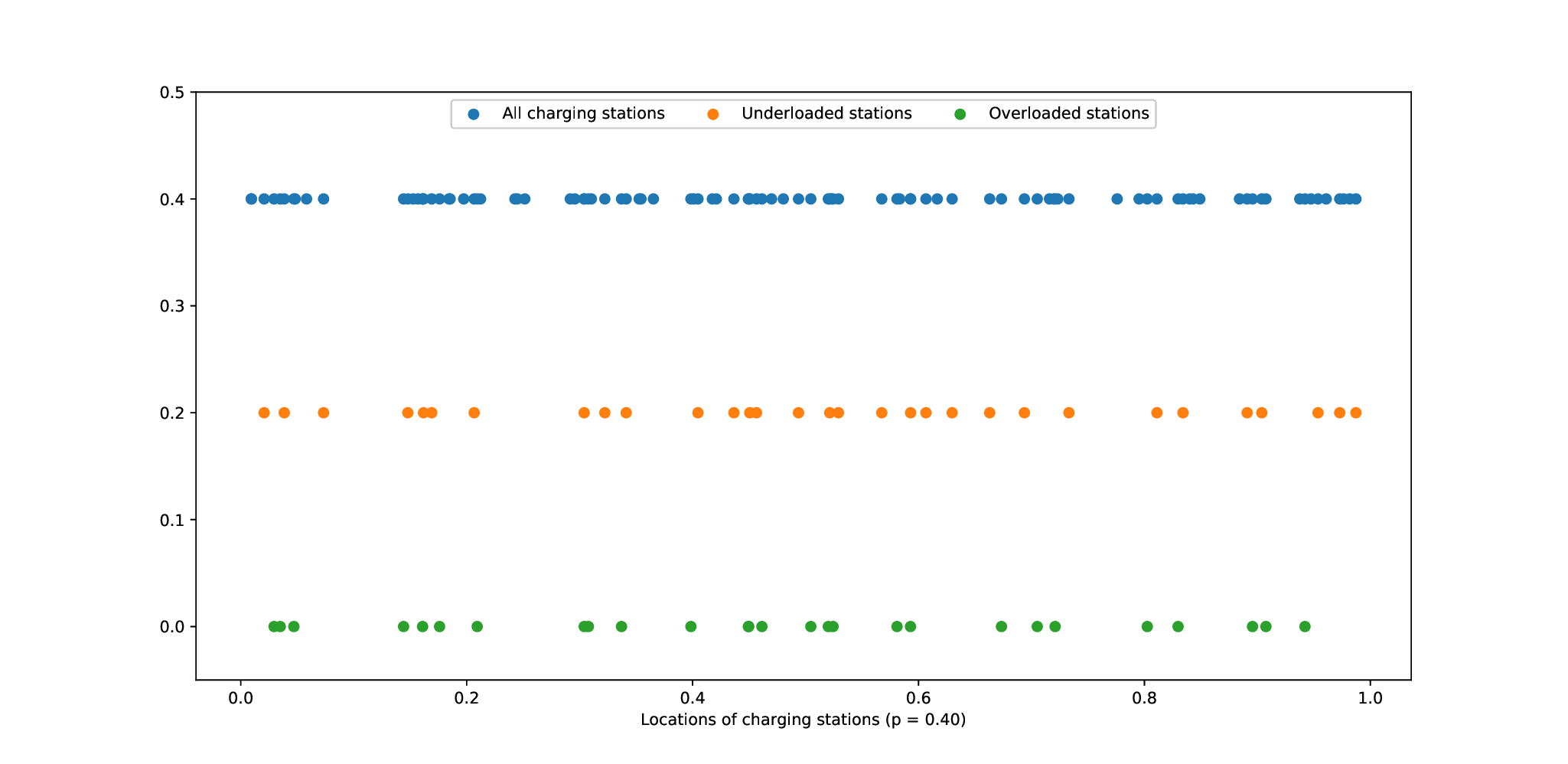}}
\subfloat[$p=0.50$]{
\includegraphics[width=0.45\linewidth]{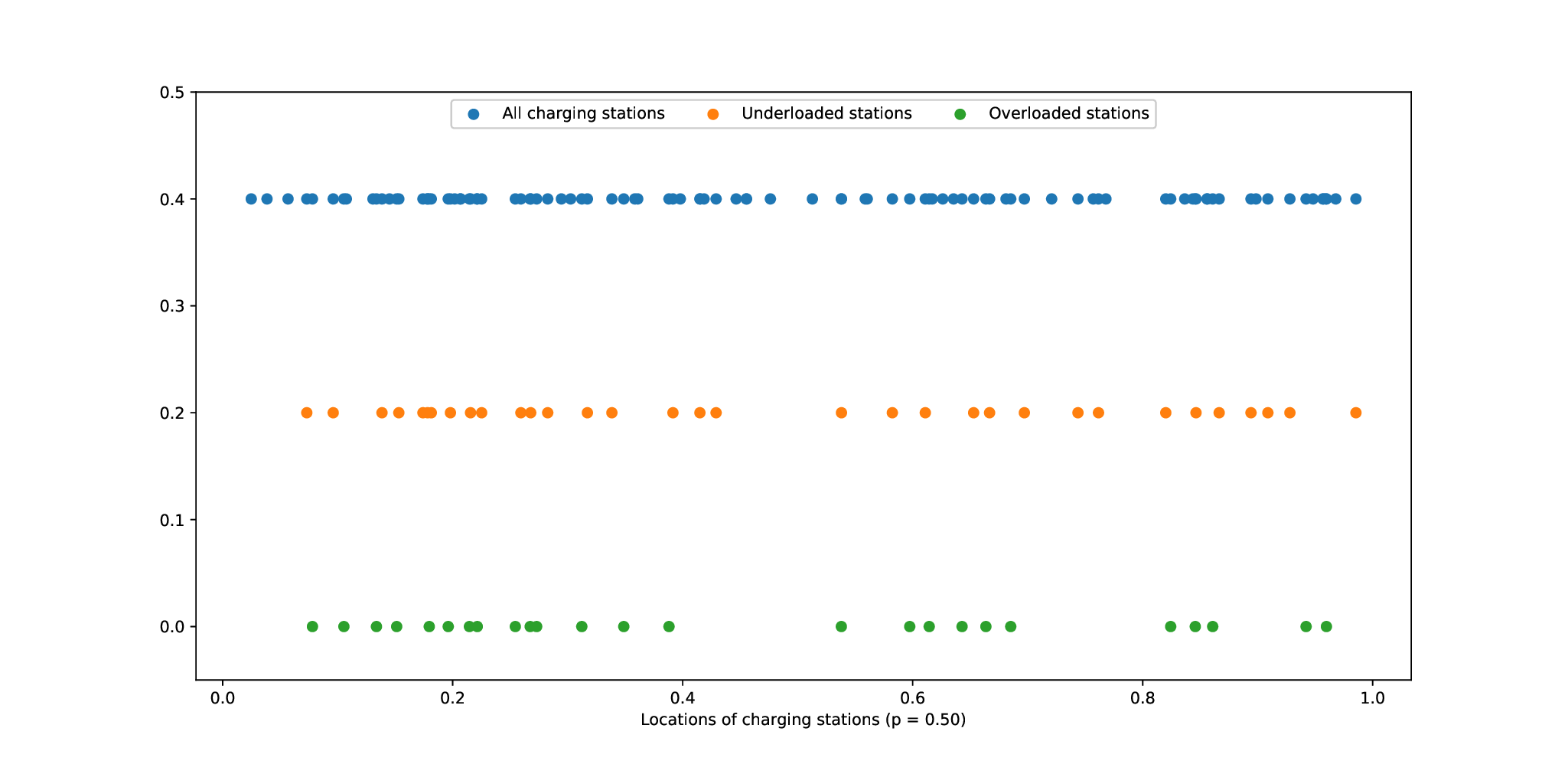}}
\caption{Spatial distribution of $100$ charging stations (top blue), underloaded stations (middle orange) and overloaded stations (bottom green) in $[0,1]$ for different probability values.}
\label{fig:spatial1d}
\end{figure}
\begin{figure}
\subfloat[$p=1$]{
\includegraphics[width=0.46\linewidth]{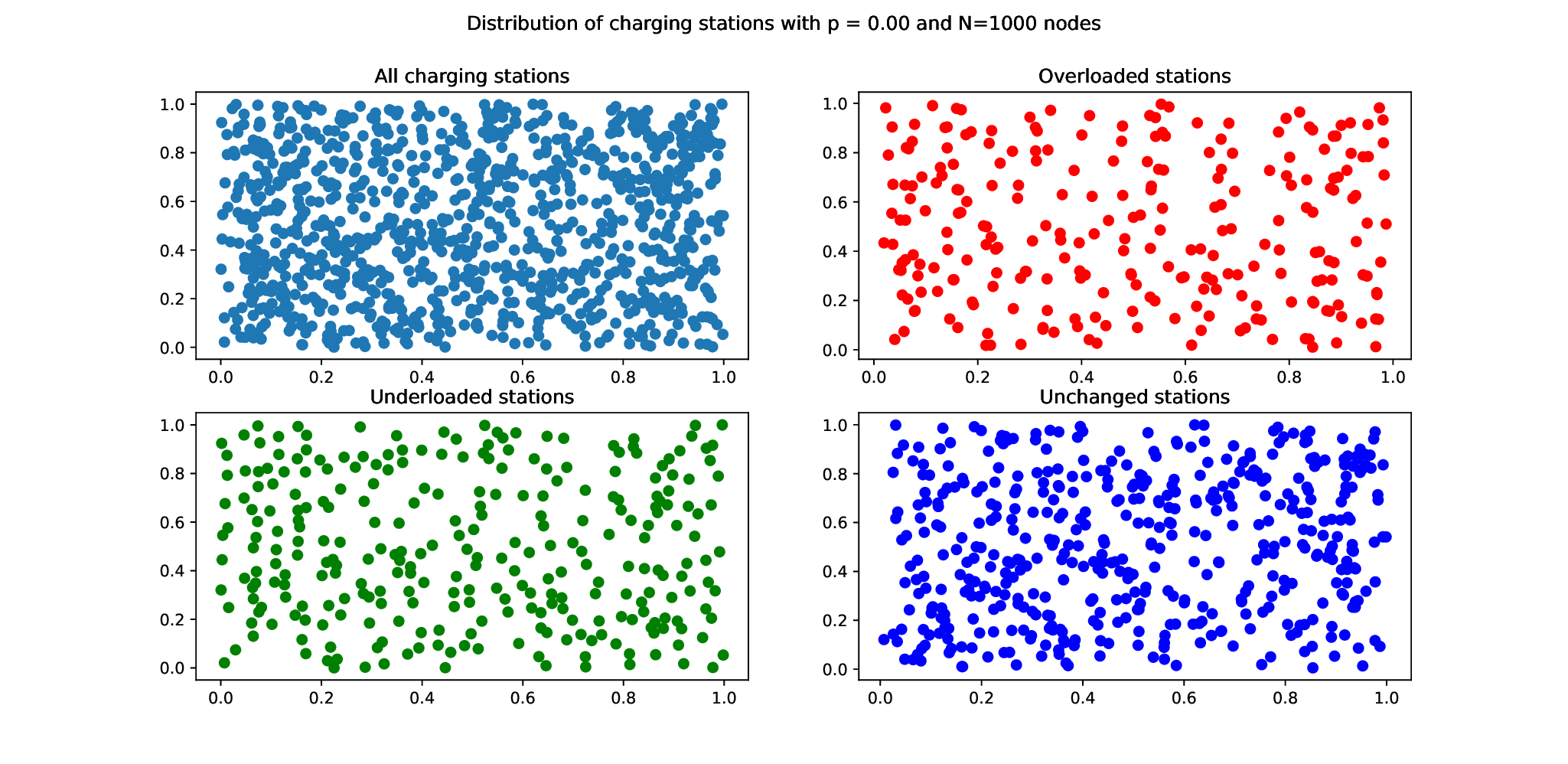}}
\subfloat[$p=0.75$]{
\includegraphics[width=0.46\linewidth]{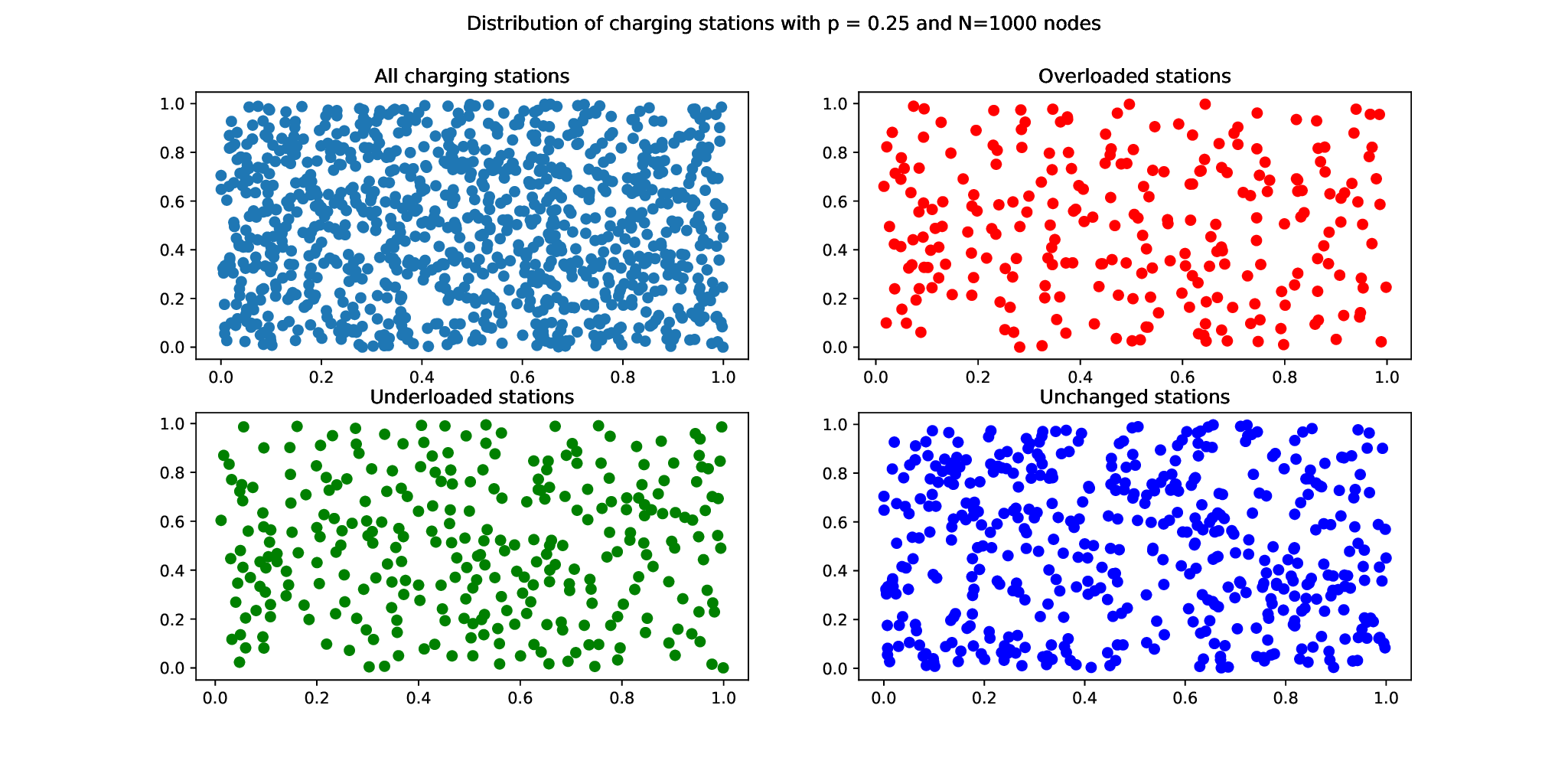}}\\
\subfloat[$p=0.60$]{
\includegraphics[width=0.46\linewidth]{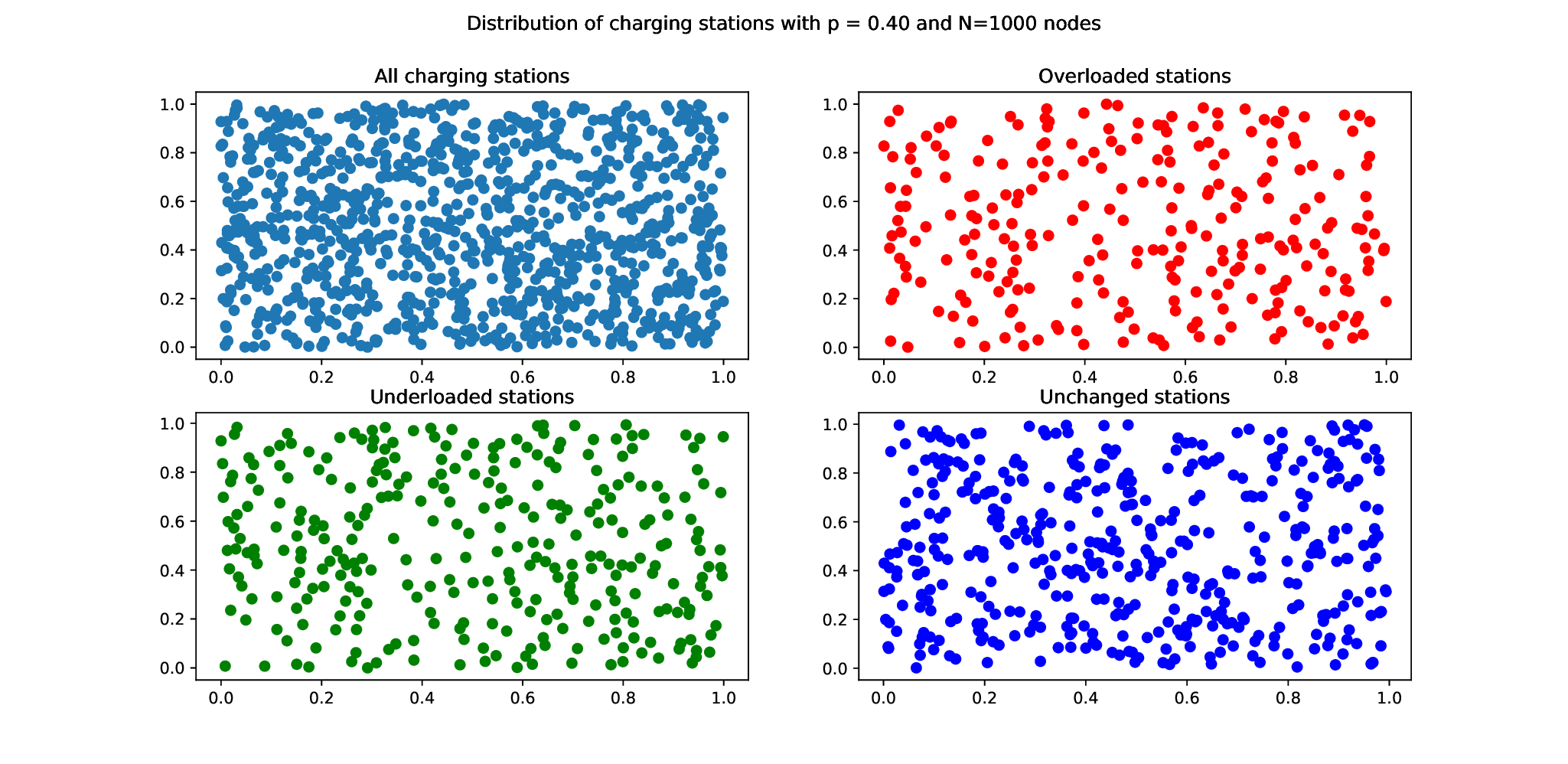}}
\subfloat[$p=0.50$]{
\includegraphics[width=0.46\linewidth]{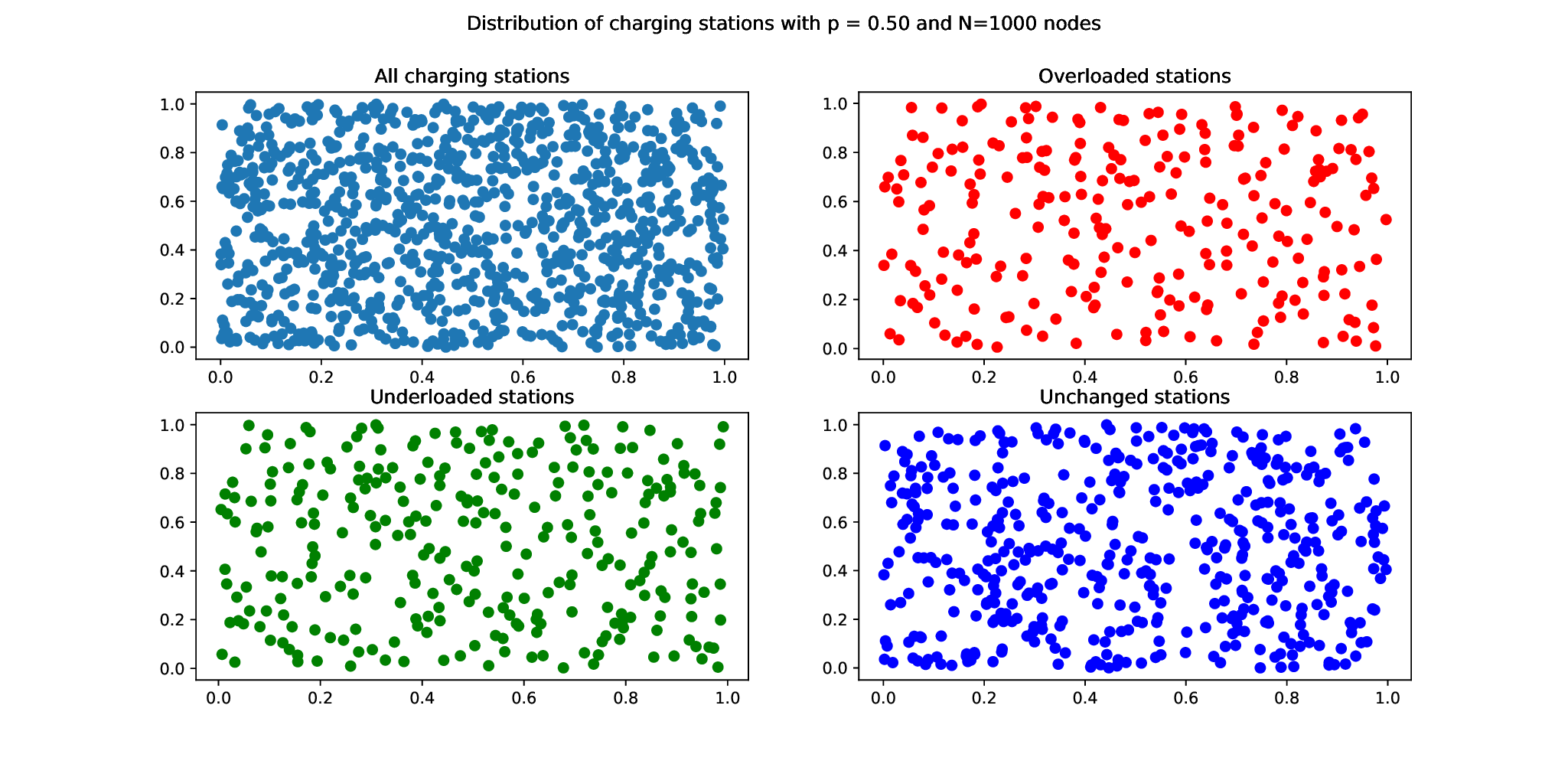}}
\caption{Spatial distribution of $1000$ charging stations (top-left), overloaded stations (top-right), underloaded stations (bottom-left) and stations with unchanged load (bottom-right) in $[0,1]^2$ for different probability values.}
\label{fig:spatial2d}
\end{figure}

\subsection{State-dependent shift probabilities and adaptive mobility strategies}

Exploring state-dependent strategies, where shift probabilities change with transient parameters such as the queue length, is an important research direction. On a related note, \citep{Leskela_Unger_2012,rolla2017stability} investigate the steady-state behaviour and system stability when the arrivals are distributed on the circle and a myopic server operates according to a greedy policy serving the nearest arrivals. In our case, however, we have multiple servers. When customers change queues based on the queue length they observe when they arrive, characterization of the steady-state behaviour and stability are compelling future directions.
As a first step in this direction, one could study an adaptive $(k,p)$-NNS strategy where
customers join empty queues or, with probability $p$, join a non-empty queue; otherwise, they select one of the $k$ nearest neighboring queues.  Such strategies likely improve server utilization and reduce idling.  Techniques similar to those in \citep{Borst_Jonckheere_Leskela_2008} might be feasible for showing that overloaded queues under adaptive strategies would also be overloaded under non-adaptive ones, potentially using non-adaptive results as upper bounds for adaptive cases. Understanding these dynamics under state-dependent and adaptive strategies remains an interesting area for future research.

\subsection{Impact of ambient space.}

How does the ambient space impact $\cO_N$? If factors from the Euclidean space such as traversal times to other queues are considered, several classical queueing problems (delay, wait times etc.) can be formulated which might be interesting in their own right, see for example \citep{Pender_2022} and references therein.

\bibliographystyle{abbrv}
\bibliography{references}
\end{document}